\documentclass[a4paper, article,  11pt, oneside]{memoir}

\counterwithout{section}{chapter}
\setcounter{secnumdepth}{2}

\usepackage[T1]{fontenc}

\usepackage[scaled=0.85]{berasans}
\usepackage[scaled=0.84]{beramono}



\usepackage{titletoc}
\usepackage{titlesec}
\titleformat{\chapter}[display]
  {\normalfont\sffamily\huge\bfseries}
  {\chaptertitlename\ \thechapter}{20pt}{\Huge}
\titleformat{\section}
  {\normalfont\sffamily\Large\bfseries}
  {\thesection}{1em}{}
	\titleformat{\subsection}
  {\normalfont\sffamily\large\bfseries}
  {\thesubsection}{1em}{}

\usepackage{amsmath,amssymb, amsfonts, mathrsfs}
 \numberwithin{equation}{section}
\usepackage[amsmath,thmmarks]{ntheorem}

\usepackage[euler-hat-accent, euler-digits]{eulervm}
\usepackage{euscript}
\usepackage{eufrak}
\usepackage[english]{babel}
\usepackage[utf8x]{inputenc}
\usepackage{bbm}

\usepackage{enumitem}
\usepackage{nameref}

\usepackage{cite}

\usepackage{lastpage, setspace}


\usepackage{graphicx, pst-all}
\usepackage[all]{xy}

\usepackage{tikz}
\usetikzlibrary{matrix}

\usepackage{multicol}
\usepackage{listings}
%
\usepackage{verbatim}
\usepackage{pdflscape}
\usepackage{color}
\usepackage{bm}

\usepackage{setspace}

\usepackage{blindtext}

%


\makeatletter
\newcommand{\displaybump}{\hbox to \@totalleftmargin{\hfil}}
\makeatother

\frenchspacing

\psset{linewidth=0.4pt}
\psset{arrowsize=3pt}

\SelectTips{cm}{}
\newdir{ >}{{}*!/-5pt/\dir{>}}


\linespread{1.05}
%


\DeclareMathOperator\met{(X,d)}






\theoremstyle{plain}
\theoremseparator{.}
\newtheorem{theorem}{\sffamily{Theorem}}[section]

\newtheorem{corollary}[theorem]{\sffamily{Corollary}}

\newtheorem{lemma}[theorem]{\sffamily{Lemma}}
\newtheorem{proposition}[theorem]{\sffamily{Proposition}}

\theoremsymbol{\ensuremath{\clubsuit}}

\theoremstyle{nonumberplain}
\theorembodyfont{\normalfont}
\theoremsymbol{\ensuremath{\blacksquare}}
\newtheorem{proof}{\sffamily{Proof}}
\newtheorem{proff}{\sffamily{Proof of Theorem \ref{maintheorem}}}
\newtheorem{profff}{\sffamily{Proof of Theorem \ref{compactfixedpoint}}}


\newcommand{\Q}{\mathbb{Q}}
\newcommand{\R}{\mathbb{R}}
\newcommand{\Z}{\mathbb{Z}}

\newcommand\norm[1]{\left\lVert#1\right\rVert}

\newcommand\abs[1]{\left\lvert#1\right\rvert}



\renewcommand{\epsilon}{\ensuremath\varepsilon}


\renewcommand{\phi}{\ensuremath{\varphi}}



\newcommand*{\TitleFont}{%
      \usefont{\encodingdefault}{\sfdefault}{b}{n}%
      \fontsize{18}{20}%
      \selectfont}



\title{\TitleFont Fixed point theorems for metric spaces with a conical geodesic bicombing}
\author{Giuliano Basso}
\date{\today}

%
%



%
%

\begin{document}

\maketitle

\renewcommand{\abstractname}{\sffamily{Abstract}}
\begin{abstract}
\noindent We derive two fixed point theorems for a class of metric spaces that includes all Banach spaces and all complete Busemann spaces. We obtain our results by the use of a \(1\)-Lipschitz barycenter construction and an existence result for invariant Radon probability measures. Furthermore, we construct a bounded complete Busemann space that admits an isometry without fixed points. 
\end{abstract}

\section{Introduction}
Let \((X,d)\) be a metric space. It is well-known that if \((X,d)\) is a complete CAT(0) space, then every subgroup \(\Gamma\) of the isometry group of \(X\) with bounded orbits has a non-empty fixed point set, cf. \cite[Corollary II.2.8]{bridson}. Analogous results are known for a wide variety of metric spaces. For example, the above statement holds if the metric space \((X,d)\) is an L-embedded Banach space or an injective metric space, cf. \cite[Theorem A]{badermonod2012fixed} and \cite[Proposition 1.2]{lang2013injective}. Further results can be found in \cite{leust, edelstein}.

A metric space \(\met\) is said to be a \textit{Busemann space} if \((X,d)\) is a geodesic metric space and if the function \(t\mapsto d(\sigma(t), \tau(t))\) is convex on \([0,1]\) for all pairs of constant speed geodesics \(\sigma, \tau\colon [0,1]\to X\). It turns out that our initial statement does not hold if \((X,d)\) is a complete Busemann space instead of a complete CAT(0) space. A counterexample is discussed in Section \ref{three}.

The first main result of this article, Theorem \ref{compactfixedpoint}, is a fixed point theorem for the class of metric spaces that admit a conical geodesic bicombing. We follow D. Descombes and U. Lang and say that a map \(\sigma\colon X\times X\times [0,1]\to X\) is a \textit{conical geodesic bicombing} if for all points \(x,y\) in \(X\) the map \(\sigma_{xy}(\cdot):=\sigma(x,y,\cdot)\) is a constant speed geodesic from \(x\) to \(y\) (meaning that \(\sigma_{xy}(0)=x\), \(\sigma_{xy}(1)=y\) and \(d(\sigma_{xy}(s), \sigma_{xy}(t))=\abs{s-t}d(x,y)\) for all \(s,t\) in \([0,1]\)) and if the inequality
\begin{equation*}
d(\sigma_{xy}(t), \sigma_{x^\prime y^\prime}(t))\leq (1-t)d(x,x^\prime)+td(y,y^\prime)
\end{equation*}
holds for all points \(x,x^\prime,y,y^\prime\) in \(X\) and all \(t\) in \([0,1]\), cf. \cite{lang1}. The term bicombing was coined by D. Epstein and W. Thurston in the context of combinatorial group theory, cf. \cite[p. 84]{Epstein:1992:WPG:573874}. To the author's knowledge, conical geodesic bicombings have first been considered by S. Itoh under the name \(W\)-convexity mappings that satisfy condition (III), cf. \cite{itoh1979some}. Examples of metric spaces that admit a conical geodesic bicombing include all convex subsets of normed vector spaces and all Busemann spaces. Furthermore, every injective metric space admits a conical geodesic bicombing, cf. \cite[p. 369]{lang1}. For further examples and a thorough discussion of conical geodesic bicombings we refer to \cite{lang1}. 


We say that a conical geodesic bicombing \(\sigma\colon X\times X\times [0,1]\to X\) has the \textit{midpoint property} if \(\sigma_{xy}(\frac{1}{2})=\sigma_{yx}(\frac{1}{2})\) for all points \(x,y\) in \(X\). Let \(A\subset X\) be a subset of \(X\). The set \(\textrm{conv}_{\sigma}(A):= \bigcup_{k\geq 1} A_k\), where the sequence \(\big(A_k\big)_{k\geq 1}\) of subsets of \(X\) is given by the recursive rule
\begin{equation*}
A_1:= A, \quad A_{k+1}:=\big\{ \sigma_{xy}(t) : x,y\in A_{k}, t\in [0,1]\big\},
\end{equation*}
is called the \textit{\(\sigma\)-convex hull of \(A\)}. 
We use \(\overline{\textrm{conv}_{\sigma}}(A)\) to denote the closure of the \(\sigma\)-convex hull of \(A\). Let \(\phi\colon X\to X\) be an isometry of \((X,d)\) and let \(\sigma\colon X\times X\times [0,1]\to X\) be a conical geodesic bicombing. We say that \(\phi\) is \textit{\(\sigma\)-equivariant} if \(\phi\circ\sigma_{xy}=\sigma_{\phi(x)\phi(y)}\) for all points \(x,y\) in \(X\). Moreover, a subsemigroup \(\Sigma\) of the isometry group of \(X\) is called \(\sigma\)-equivariant if every isometry of \(\Sigma\) is \(\sigma\)-equivariant.
Now, we have everything on hand to state our first result.
\begin{theorem}\label{compactfixedpoint}
Let \((X,d)\) denote a complete metric space and let \(\sigma\colon X\times X\times [0,1]\to X\) be a conical geodesic bicombing that has the midpoint property. Suppose that \(
\Sigma\) is a \(\sigma\)-equivariant subsemigroup of the isometry group of \(X\). If there is a non-empty compact subset \(K\subset X\) such that \(s(K)=K\) for all \(s\) in \(\Sigma\), then there is a point \(x_\star\) in \(\overline{\textrm{conv}_{\sigma}}(K)\) such that \(s(x_\star)=x_\star\) for all \(s\) in \(\Sigma\). 
\end{theorem}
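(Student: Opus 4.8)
The plan is to deduce the theorem from two facts established in the preceding sections: the existence of a $1$-Lipschitz, equivariant barycenter map associated with a conical geodesic bicombing having the midpoint property, and the existence of an invariant Radon probability measure for the action of $\Sigma$ on the compact set $K$. Throughout, write $P(K)$ for the set of Radon probability measures on $K$ equipped with the $1$-Wasserstein (transportation) distance, and for a Borel map $f$ and $\nu\in P(K)$ write $f_\ast\nu$ for the pushforward of $\nu$ under $f$.

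\emph{Step 1: an invariant measure.} Each $s\in\Sigma$ maps $K$ onto $K$, and every isometry is injective, so $s|_K$ is a bijective isometry of the compact metric space $K$. Hence $s|_K$ belongs to the isometry group of $K$, which is compact by the Arzel\`a--Ascoli theorem. Let $H$ be the closure, inside this isometry group, of the subsemigroup generated by $\{s|_K : s\in\Sigma\}$; as a closed subsemigroup of a compact group, $H$ is a subgroup. Averaging an arbitrary element of $P(K)$ against the normalized Haar measure of $H$ yields an $H$-invariant, hence $\Sigma$-invariant, measure $\mu\in P(K)$; here the surjectivity of the maps $s|_K$ is used in an essential way. (If the existence of invariant Radon probability measures is isolated as a separate statement, it is invoked directly at this point instead.)

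\emph{Step 2: the barycenter and conclusion.} Let $\beta$ denote the barycenter map. Since $\mu$ has compact support it lies in the domain of $\beta$ (the map is first defined on finitely supported measures and then extended by $1$-Lipschitz continuity, using the completeness of $(X,d)$ and the density of finitely supported measures in $P(K)$). Put $x_\star := \beta(\mu)$. By the support property of $\beta$ we have $x_\star\in\overline{\textrm{conv}_{\sigma}}(\supp\mu)\subseteq\overline{\textrm{conv}_{\sigma}}(K)$. Finally, fix $s\in\Sigma$. By hypothesis $s$ is $\sigma$-equivariant, so it commutes with the barycenter construction: the midpoint property makes this construction symmetric, the identity $s\circ\sigma_{xy}=\sigma_{s(x)s(y)}$ lets $s$ pass through the defining iteration on finitely supported measures, and it then passes through the continuous extension since $s_\ast$ is an isometry of $(P(K),W_1)$. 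Therefore $s(x_\star)=s(\beta(\mu))=\beta(s_\ast\mu)=\beta(\mu)=x_\star$, the third equality being $s_\ast\mu=\mu$. As $s\in\Sigma$ was arbitrary, $x_\star$ is the asserted common fixed point.

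I expect the genuinely hard work to be the barycenter construction itself — producing an equivariant $1$-Lipschitz map $\beta$ on spaces admitting a conical geodesic bicombing with the midpoint property, and checking that $\beta(\nu)$ always lies in the closed $\sigma$-convex hull of $\supp\nu$ — but that is done before this theorem is stated. Within this proof the only real points to verify are that $\Sigma$ acts on $K$ through \emph{surjective} isometries (so that an invariant probability measure exists) and that the hypothesis of $\sigma$-equivariance is precisely what forces each $s\in\Sigma$ to commute with $\beta$; once these are in place the argument is the standard ``invariant measure plus equivariant barycenter'' scheme.
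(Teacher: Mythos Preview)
Your proof is correct and follows the same overall ``invariant measure plus equivariant barycenter'' scheme as the paper, but you obtain the invariant measure by a different route. The paper embeds $P(K)$ isometrically into the normed space $M^0(K)$ of signed finite Radon measures of total mass zero (via $\mu\mapsto\mu-\mu_0$), notes that the image is a non-empty compact convex subset on which each $s\in\Sigma$ acts as an affine isometry, and then invokes Ryll--Nardzewski's fixed point theorem to produce the $\Sigma$-invariant $\mu_\star$. You instead use the compactness of $K$ more directly: the full isometry group of $K$ is compact by Arzel\`a--Ascoli, the closure of the semigroup $\{s|_K:s\in\Sigma\}$ inside it is a compact group $H$, and averaging any probability measure against normalised Haar measure on $H$ yields an $H$-invariant (hence $\Sigma$-invariant) element of $P(K)$. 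Your argument is arguably more elementary in this compact setting --- it needs only the standard fact that a closed subsemigroup of a compact group is a group, together with Haar measure --- whereas the paper's choice of Ryll--Nardzewski highlights the affine structure of $P(K)$ and keeps the presentation aligned with the fixed-point-theoretic theme of the article. Step~2 is essentially identical in both proofs: apply the contracting barycenter map $\beta_\sigma$ from Theorem~\ref{navas1}, use its locality to place $x_\star$ in $\overline{\textrm{conv}_\sigma}(K)$, and use its $\phi$-equivariance (for $\sigma$-equivariant $1$-Lipschitz $\phi$) to conclude $s(x_\star)=x_\star$.
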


In \cite[p. 620]{navas}, A. Navas introduced a simple geometric argument that implies Theorem \ref{compactfixedpoint} if one requires additionally that the closed convex hull of \(K\) is compact. Unfortunately, Navas's method seems not to work without this additional assumption. 

The proof strategy of Theorem \ref{compactfixedpoint} may be roughly described as follows: We use Ryll-Nardzewski's fixed point theorem to construct an invariant Radon probability measure first, and then we use the equivariant contracting barycenter map from Theorem \ref{navas1} to obtain a fixed point.


How restrictive is the assumption in Theorem \ref{compactfixedpoint} that the subsemigroup \(\Sigma\) is \(\sigma\)-equivariant? Fortunately, metric spaces \((X,d)\) that admit a conical geodesic bicombing \(\sigma\colon X\times X\times [0,1]\to  X\) such that every isometry of \(X\) is \(\sigma\)-equivariant provide a broad spectrum of examples. For instance, every isometry of a Busemann space is \(\sigma\)-equivariant with respect to the conical geodesic bicombing \(\sigma\) given by the unique geodesics. Moreover, it is a consequence of a generalised version of the Mazur-Ulam Theorem that every isometry of an open convex subset of a normed vector space is \(\sigma\)-equivariant with respect to the conical geodesic bicombing \(\sigma\) given by the linear geodesics, cf. \cite[p. 368]{mankiewi1972extension}. Furthermore, Proposition 3.8 in \cite{lang2013injective} asserts that every injective metric space \((X,d)\) admits a conical geodesic bicombing \(\sigma\) such that every isometry of \(X\) is \(\sigma\)-equivariant.


Our second main result is a strengthened version of Theorem \ref{compactfixedpoint} if the subsemigroup \(\Sigma\) is generated by a single isometry. 


\begin{theorem}\label{maintheorem} Let \((X,d)\) denote a complete metric space and let \(\sigma\colon X\times X\times [0,1]\to X\) be a conical geodesic bicombing that has the midpoint property. Suppose that \(\phi\colon X\to X\) is a \(\sigma\)-equivariant isometry of \((X,d)\). If there is a point \(x_0\) in \(X\) and a compact subset \(K\subset X\) such that the strict inequality 
\begin{equation}\label{strictIneq}
\limsup_{k\to +\infty}\left(\sup_{l\geq 0} \frac{1}{k} \sum_{i=0}^{k-1} \mathbbm{1}_{K}(\phi^{i+l}(x_0))\right)> 0
\end{equation}
holds, then there is a point \(x_\star\) in \(\overline{\textrm{conv}_{\sigma}}\left(\big\{ \phi^k(x_0) : k\geq 0\big\}\right)\) such that \(\phi(x_\star)=x_\star\). 
\end{theorem}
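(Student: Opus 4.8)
Write $O:=\{\phi^{k}(x_0):k\geq 0\}$ and $S:=\{n\geq 0:\phi^{n}(x_0)\in K\}$. The sequence $k\mapsto\sup_{l\geq 0}\sum_{i=0}^{k-1}\mathbbm 1_{K}(\phi^{i+l}(x_0))$ is subadditive, so by Fekete's lemma the $\limsup$ in \eqref{strictIneq} equals $\inf_{k}$ of the same sequence, i.e. the upper Banach density $\delta$ of $S$; hypothesis \eqref{strictIneq} thus says exactly $\delta>0$, and in particular for \emph{every} $k$ some window of length $k$ contains at least $\delta k$ elements of $S$. Shrinking such a window to $[a_k,b_k]$ with endpoints in $S$ keeps $\geq\delta k$ elements of $S$ and gives $\delta k\leq b_k-a_k+1=:N_k\leq k$, so $N_k\to+\infty$. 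The plan has three steps: (i) $O$ is bounded; (ii) produce a $\phi$-invariant Radon probability measure $\nu$ with $\supp\nu\subseteq\overline O$; (iii) apply the equivariant contracting barycenter map $\beta$ of Theorem \ref{navas1} to $\nu$. For (i): since each $\phi^m$ is an isometry, $d(\phi^{s}(x_0),\phi^{s'}(x_0))=d(\phi^{s-s'}(x_0),x_0)$ for $s\geq s'$, hence $d(\phi^{n}(x_0),x_0)\leq 2\sup_{y\in K}d(x_0,y)$ for all $n$ in the difference set $(S-S)\cap\mathbb Z_{>0}$; as $d^{*}(S)>0$ forces $S-S$ to be syndetic (a classical fact), $O$ is bounded, whence so is $\overline{\textrm{conv}_\sigma}(O)$ (use the defining inequality of $\sigma$ with $x'=y'$). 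This boundedness will guarantee that $\nu$ lies in the domain of $\beta$.

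Step (ii) is the heart of the matter. Regard $\Lambda_{k}:=\frac1{N_k}\sum_{n=a_k}^{b_k}\delta_{\phi^{n}(x_0)}$ as a positive norm-one functional on the Banach space $C_b(X)$ of bounded continuous functions, and let $\Lambda$ be a weak-$*$ cluster point (Banach--Alaoglu). Then $\Lambda$ is positive with $\Lambda(\mathbbm 1)=1$; it is $\phi$-invariant because $\Lambda_k(h\circ\phi)-\Lambda_k(h)=\tfrac1{N_k}(h(\phi^{b_k+1}(x_0))-h(\phi^{a_k}(x_0)))\to0$; it vanishes on functions vanishing on $\overline O$; and with $f_j:=\max(0,1-j\,d(\cdot,K))\downarrow\mathbbm 1_K$ we have $\Lambda_k(f_j)\geq\tfrac1{N_k}\#(S\cap[a_k,b_k])\geq\delta$, so $\Lambda(f_j)\geq\delta$ for all $j$. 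Now split $\Lambda=\mu_c+\mu_{fa}$ into its countably additive and purely finitely additive parts (Yosida--Hewitt), $\mu_c$ being the largest countably additive measure below $\Lambda$. Compactness of $K$ is what forces $\mu_c$ to be large: for $g\in C(K)$ and any bounded continuous extension $\tilde g$ to $X$, the limit $\Lambda_K(g):=\lim_j\Lambda(\tilde g f_j)$ exists and, crucially, is independent of $\tilde g$ — two extensions differ by a function vanishing on the \emph{compact} set $K$, hence uniformly small near $K$ — so by Riesz's theorem $\Lambda_K$ is integration against a Radon measure $\nu_K$ on $K$ with $\nu_K(K)=\lim_j\Lambda(f_j)\geq\delta$; since $\int h\,d\nu_K=\lim_j\Lambda(hf_j)\leq\Lambda(h)$ for $h\geq0$, we get $\nu_K\leq\Lambda$ and hence $\mu_c(X)\geq\nu_K(X)\geq\delta>0$. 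Finally, push-forward by the homeomorphism $\phi$ preserves the countably additive part, so $\phi_*\mu_c=\mu_c$; then $\nu:=\mu_c/\mu_c(X)$ is a $\phi$-invariant probability measure, and $\supp\nu\subseteq\overline O$ follows from $\mu_c$ vanishing on functions vanishing on $\overline O$ (a Urysohn argument). Being concentrated on the Polish space $\overline O$, it is Radon.

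For (iii): $\nu$ has bounded support, hence finite first moment, hence lies in the domain of $\beta$. Since $\phi$ is $\sigma$-equivariant, Theorem \ref{navas1} gives $\phi(\beta(\nu))=\beta(\phi_*\nu)=\beta(\nu)$, so $x_\star:=\beta(\nu)$ is fixed by $\phi$. Moreover $\beta(\nu)\in\overline{\textrm{conv}_\sigma}(\supp\nu)\subseteq\overline{\textrm{conv}_\sigma}(\overline O)$, and $\overline{\textrm{conv}_\sigma}(\overline O)=\overline{\textrm{conv}_\sigma}(O)$ — proved by induction over the sets $A_k$ defining $\textrm{conv}_\sigma$, using that $(x,y)\mapsto\sigma_{xy}(t)$ is $1$-Lipschitz, which is immediate from the defining inequality of $\sigma$. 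Hence $x_\star\in\overline{\textrm{conv}_\sigma}(\{\phi^{k}(x_0):k\geq 0\})$, as required.

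The crux is step (ii): upgrading the finitely additive invariant mean $\Lambda$ to a genuine $\phi$-invariant Radon probability measure. Compactness of $K$, channelled through the Yosida--Hewitt splitting, is exactly what prevents all the mass from leaking into the purely finitely additive part, and this realises the existence result for invariant Radon probability measures the proof needs. If one already knew that $\overline O$ is compact, this step would be a routine Krylov--Bogolyubov argument; doing without that hypothesis is the point. A smaller technical matter, settled in (i), is ensuring that the resulting measure lies in the domain of $\beta$.
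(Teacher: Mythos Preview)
Your argument is correct and shares the paper's three-step architecture --- orbit boundedness via syndeticity of $S-S$ (this is the paper's Lemma~\ref{bounded}), construction of a $\phi$-invariant Radon probability measure supported on $\overline O$, and application of the equivariant barycenter $\beta_\sigma$ from Theorem~\ref{navas1} --- but your step~(ii) takes a genuinely different route. The paper (Theorem~\ref{mainTheorem}, following Oxtoby--Ulam and Adamski) fixes a generalized limit $\Theta$ on $\ell^\infty(\mathbb Z_{\geq0})$, sets $\beta(U)=\Theta\big((\mathbbm 1_U(\phi^n(x_0)))_n\big)$ on open sets, and feeds this into Topsøe's inner-approximation construction $\mu(B)=\sup_{K'\subset B}\inf_{U\supset K'}\beta(U)$, checking invariance by hand on compacta. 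You instead take a weak-$*$ cluster point $\Lambda$ of empirical means in $C_b(X)^*$, extract its tight (Radon) part $\mu_c$, and use compactness of $K$ --- via the Riesz representation on $C(K)$ --- to force $\mu_c(X)\geq\delta>0$; invariance of $\mu_c$ then comes from uniqueness of the decomposition under the homeomorphism $\phi$. One terminological quibble: what you need is not quite Yosida--Hewitt on a $\sigma$-field but the band decomposition of $C_b(X)^*_+$ into its tight part plus remainder; with that reading the inequality $\nu_K\leq\mu_c$ is immediate from maximality of the tight part. The paper's construction generalises verbatim to countable amenable semigroups of $1$-Lipschitz self-maps (this is Theorem~\ref{maintheorem23}), whereas your invariance step as written uses that $\phi^{-1}$ is continuous; on the other hand, your approach replaces the Topsøe machinery by Banach--Alaoglu and Riesz, and makes transparent that compactness of $K$ is exactly what stops the invariant mean's mass from escaping into the non-tight part.
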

The function \(\mathbbm{1}_{K}\colon X\to \{0,1\}\) in Theorem \ref{maintheorem} denotes the indicator function of the subset \(K\subset X\).


Note that the left hand side of \eqref{strictIneq} is equal to the \textit{upper Banach density}, cf. \cite[Definition 3.7]{furstenberg2014recurrence}, of the set \(D:=\left\{ k\geq 0 : \mathbbm{1}_{K_0}(\phi^{k}(x_0))=1 \right\}\). This fact allows us to invoke a basic result from combinatorial number theory in order to show that the orbits of the isometry \(\phi\) are bounded, see Lemma \ref{bounded}.

One key ingredient in the proof of Theorem \ref{maintheorem} is a generalization of a classical existence result for invariant Radon measures, see Theorem \ref{mainTheorem}.


The paper is structured as follows: In Section \ref{three}, we construct a bounded complete Busemann space that admits an isometry without fixed points. In Section \ref{six}, we transfer a \(1\)-Lipschitz barycenter construction introduced by A. Es-Sahib and H. Heinich, cf. \cite{heinich}, and A. Navas, cf. \cite{navas}, into the setting of metric spaces that admit a conical geodesic bicombing. The primary result of Section \ref{six} is Theorem \ref{navas1}. 
The purpose of Section \ref{seven} is to derive Theorem \ref{mainTheorem}, which is a generalization of a classical result from topological dynamics that is due to J. Oxtoby and S.  Ulam, cf. \cite[Theorem 1]{ulamOxtoby1939}. The results from Section \ref{seven} may be of independent interest. Finally, in Section \ref{five} we use Theorem \ref{navas1} and Theorem \ref{mainTheorem} to prove our main results. 
\section{Counterexample}\label{three}
In this section, we construct a bounded complete Busemann space that admits an isometry without fixed points. As usual, let \(\ell^1(\Z)\subset \R^\Z\) denote the linear subspace of \(\R^\Z\) that consists of all sequences \(x:=(x_k)_{k\in \Z}\) such that
\begin{equation*}
\norm{x}_1:= \sum_{k\in \Z} \abs{x_k} < +\infty. 
\end{equation*}
Now, we use a standard technique, cf. \cite[p. 786]{johnson2001}, to renorm the Banach space \((\ell^1(\Z), \norm{\cdot}_1)\) into a strictly convex Banach space. We define the map \(\norm{\cdot}_{\star}\colon \ell^1(\Z) \to \R\) through the assignment
\begin{equation*}
(x_k)_{k\in \Z} \mapsto \sqrt{\left(\sum_{k\in \Z}\abs{x_k}\right)^2 +\sum_{k\in \Z}\abs{x_k}^2}.
\end{equation*}
It is straightforward to show that the map \(\norm{\cdot}_\star\) defines a norm on \(\ell^1(\Z)\). Elementary estimates show that
\begin{equation*}
\frac{1}{\sqrt{2}}\norm{\cdot}_\star \leq \norm{\cdot}_1 \leq \norm{\cdot}_\star;
\end{equation*}
hence, the norms \(\norm{\cdot}_1\) and \(\norm{\cdot}_\star\) are equivalent. It follows that \((\ell^1(\Z), \norm{\cdot}_\star)\) is a Banach space. Recall that a normed vector space \((V, \norm{\cdot}_V)\) is said to be \emph{strictly convex} if for all distinct points \(x,y\) in \(V\) with \(\norm{x}_V=\norm{y}_V=1\) and for all \(\lambda\) in \((0,1)\) we have the strict inequality \(\norm{(1-\lambda)x+\lambda y}_V < 1 \). 
\begin{lemma}
The Banach space \((\ell^1(\Z), \norm{\cdot}_\star)\) is strictly convex.
\end{lemma}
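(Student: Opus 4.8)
The plan is to exploit the decomposition
\(\norm{x}_\star=\sqrt{\norm{x}_1^2+\norm{x}_2^2}\), where \(\norm{\cdot}_2\) denotes the usual \(\ell^2\)-norm. This makes sense on \(\ell^1(\Z)\) because \(\ell^1(\Z)\subset \ell^2(\Z)\) (indeed \(\norm{\cdot}_2\leq\norm{\cdot}_1\), since \(\abs{x_k}\leq 1\) for all but finitely many \(k\)). The point is that \(\norm{\cdot}_2\) restricted to \(\ell^1(\Z)\) is strictly convex, being the norm of a Hilbert space (and a linear subspace of a strictly convex space is again strictly convex), and that passing from the pair \((\norm{\cdot}_1,\norm{\cdot}_2)\) to \(\norm{\cdot}_\star\) via the Euclidean norm of \(\R^2\) cannot destroy this, even though \(\norm{\cdot}_1\) itself is far from strictly convex. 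Note that \(\norm{\cdot}_\star\) is already known to be a norm, so nothing needs to be checked there.

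Concretely, I would use the standard reformulation of strict convexity: a normed space \((V,\norm{\cdot}_V)\) is strictly convex if and only if \(\norm{u+v}_V=\norm{u}_V+\norm{v}_V\) with \(u,v\neq 0\) forces \(v=tu\) for some \(t>0\). So let \(x,y\in\ell^1(\Z)\setminus\{0\}\) with \(\norm{x+y}_\star=\norm{x}_\star+\norm{y}_\star\), and set \(a:=(\norm{x}_1,\norm{x}_2)\), \(b:=(\norm{y}_1,\norm{y}_2)\) and \(c:=(\norm{x+y}_1,\norm{x+y}_2)\), all lying in the positive quadrant \(Q:=[0,+\infty)^2\subset\R^2\). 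The triangle inequalities for \(\norm{\cdot}_1\) and \(\norm{\cdot}_2\) give \(c\leq a+b\) coordinatewise, hence, writing \(\abs{\cdot}\) for the Euclidean norm on \(\R^2\),
\[
\norm{x+y}_\star=\abs{c}\leq\abs{a+b}\leq\abs{a}+\abs{b}=\norm{x}_\star+\norm{y}_\star,
\]
and by hypothesis both inequalities are equalities.

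Two elementary facts finish the argument. First, \(\abs{\cdot}\) is strictly increasing on \(Q\) (if \(0\leq p_i\leq q_i\) for \(i=1,2\) and \(p\neq q\), then \(p_1^2+p_2^2<q_1^2+q_2^2\)), so \(\abs{c}=\abs{a+b}\) together with \(c\leq a+b\) in \(Q\) forces \(c=a+b\); in particular \(\norm{x+y}_2=\norm{x}_2+\norm{y}_2\). Second, \(\norm{\cdot}_2\) is strictly convex — equality in this last identity with \(x,y\neq 0\) forces equality in the Cauchy--Schwarz inequality \(\langle x,y\rangle\leq\norm{x}_2\norm{y}_2\), whence \(y=tx\) for some \(t>0\). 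If one prefers to argue directly from the definition stated above, one takes distinct unit vectors \(x\neq y\) and \(\lambda\in(0,1)\), applies the computation to the nonzero vectors \((1-\lambda)x\) and \(\lambda y\), obtains \(\lambda y=t(1-\lambda)x\) for some \(t>0\), and then \(\norm{y}_\star=\norm{x}_\star=1\) forces \(t(1-\lambda)=\lambda\), i.e.\ \(y=x\), a contradiction.

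I do not expect a genuine obstacle here: the whole content is the observation that strict convexity of \(\norm{\cdot}_\star\) is governed solely by the \(\ell^2\)-summand, and the only things requiring a word of care are the well-definedness of \(\norm{\cdot}_2\) on \(\ell^1(\Z)\) and the strict monotonicity of the Euclidean norm on the positive quadrant, both of which are immediate.
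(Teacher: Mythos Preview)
Your argument is correct. Both you and the paper extract strict convexity from the \(\ell^2\)-summand of \(\norm{\cdot}_\star\), but the execution differs. The paper works directly with the definition: given distinct unit vectors \(x,y\) and \(\lambda\in(0,1)\), it picks a coordinate \(k_0\) with \(x_{k_0}\neq y_{k_0}\), uses strict convexity of \(t\mapsto t^2\) at that coordinate to get \(((1-\lambda)x_{k_0}+\lambda y_{k_0})^2<(1-\lambda)x_{k_0}^2+\lambda y_{k_0}^2\), and combines this with the (non-strict) convexity estimate for the \(\norm{\cdot}_1^2\)-part to conclude \(\norm{(1-\lambda)x+\lambda y}_\star^2<(1-\lambda)\norm{x}_\star^2+\lambda\norm{y}_\star^2=1\). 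Your route is more structural: you pass to the equality-in-the-triangle-inequality reformulation, encode \(\norm{\cdot}_\star\) as the Euclidean length of the pair \((\norm{\cdot}_1,\norm{\cdot}_2)\in[0,\infty)^2\), and use strict monotonicity of the Euclidean norm on the positive quadrant to force equality in the \(\ell^2\)-triangle inequality, whence colinearity by strict convexity of \(\ell^2\). The paper's computation is shorter and entirely elementary; your argument has the virtue of isolating a general principle (if \(\norm{\cdot}=\abs{(\norm{\cdot}_A,\norm{\cdot}_B)}\) with \(\abs{\cdot}\) strictly monotone on \(Q\) and \(\norm{\cdot}_B\) strictly convex, then \(\norm{\cdot}\) is strictly convex) that applies well beyond this particular example.
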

\begin{proof}
Let \(x\) and \(y\) denote two distinct points of \(\ell^1(\Z)\) that satisfy \(\norm{x}_\star=\norm{y}_\star=1\) and let \(\lambda\) in \((0,1)\) be a real number. Since \(x\) and \(y\) are distinct, there is an integer \(k_0\) such that \(x_{k_0}\neq y_{k_0}\). It follows that 
\begin{equation}\label{strictconvexity}
\big((1-\lambda)x_{k_0}+\lambda y_{k_0}\big)^2 < (1-\lambda)x_{k_0}^2+\lambda y_{k_0}^2,
\end{equation}
as the real valued function \(f\colon \R\to \R\) given by \(f(x)=x^2\) is strictly convex. Now, elementary estimates and the strict inequality in \eqref{strictconvexity} imply that \(\norm{(1-\lambda)x+\lambda y}_{\star}^2 < (1-\lambda)\norm{x}_\star^2+\lambda \norm{y}_\star^2=1\); hence, the Banach space \((\ell^1(\Z), \norm{\cdot}_\star)\) is strictly convex, as was to be shown. 
\end{proof}
The shift map \(T\colon \ell^1(\Z)\to \ell^1(\Z)\) given by the assignment
\begin{equation*}
(x_k)_{k\in \Z}\mapsto (x_{k-1})_{k\in \Z}
\end{equation*}
is a linear map and an isometry of \((X, \norm{\cdot}_\star)\). Note that the zero sequence is the only fixed point of \(T\). Let \(x_0\in \ell^1(\Z)\) be the sequence that is equal to one if \(k=0\) and  equal to zero if \(k\neq 0\). We define the set \(A:=\big\{ T^k(x_0) : k\in \Z\big\}\). Let \(\textrm{conv}(A)\) denote the convex hull of \(A\). 
\begin{lemma}\label{LowerBound}
If \(x\) is an element of \(\textrm{conv}(A)\), then we have \(1 \leq \norm{x}_\star \leq \sqrt{2}\). 
\end{lemma}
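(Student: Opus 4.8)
The plan is to reduce the statement to the norm equivalence \(\frac{1}{\sqrt{2}}\norm{\cdot}_\star \leq \norm{\cdot}_1 \leq \norm{\cdot}_\star\) that has already been established, combined with a one-line computation of \(\norm{\cdot}_1\) on \(\textrm{conv}(A)\).

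First I would unwind the definition of \(A\). Since \(T^k(x_0)\) is the sequence whose only nonzero entry equals \(1\) and sits in position \(k\), the set \(A\) is precisely the collection \(\{e_k : k\in\Z\}\) of standard unit vectors, and distinct members of \(A\) have disjoint supports. Hence an arbitrary element \(x\) of \(\textrm{conv}(A)\) can be written as a finite convex combination \(x=\sum_{i=1}^{n}\lambda_i e_{k_i}\) with \(\lambda_i\geq 0\), \(\sum_{i=1}^{n}\lambda_i=1\) and the \(k_i\) pairwise distinct; equivalently, \(x\) is a finitely supported sequence with non-negative entries summing to \(1\). Because all entries are non-negative, no cancellation occurs and \(\norm{x}_1=\sum_{i=1}^{n}\lambda_i=1\).

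Then I would simply feed this into the norm comparison: from \(\norm{x}_1\leq\norm{x}_\star\) we obtain \(1\leq\norm{x}_\star\), and from \(\frac{1}{\sqrt{2}}\norm{x}_\star\leq\norm{x}_1\) we obtain \(\norm{x}_\star\leq\sqrt{2}\,\norm{x}_1=\sqrt{2}\), which is the assertion. (If one prefers to bypass the norm-equivalence step, the same conclusion follows directly from \(\norm{x}_\star^2=\big(\sum_i\lambda_i\big)^2+\sum_i\lambda_i^2=1+\sum_i\lambda_i^2\) together with the elementary bounds \(0\leq\sum_i\lambda_i^2\leq\sum_i\lambda_i=1\), valid since each \(\lambda_i\in[0,1]\).)

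There is essentially no real obstacle here; the only point that deserves a word of care is that \(\textrm{conv}(A)\) consists of \emph{finite} convex combinations and that, thanks to the non-negativity and disjoint supports of the basis vectors, the \(\norm{\cdot}_1\)-norm of such a combination is \emph{exactly} \(1\) rather than merely at most \(1\) — this is what makes the lower bound \(1\leq\norm{x}_\star\) come out, not just the upper bound.
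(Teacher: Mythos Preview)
Your proof is correct and essentially matches the paper's: the paper writes \(x=\sum_i\alpha_i T^{l_i}(x_0)\) with the \(l_i\) distinct, computes \(\norm{x}_\star^2=1+\sum_i\alpha_i^2\), and bounds this between \(1\) and \(2\) using \(\alpha_i^2\leq\alpha_i\)---exactly your parenthetical alternative. Your primary route via the norm equivalence \(\frac{1}{\sqrt{2}}\norm{\cdot}_\star\leq\norm{\cdot}_1\leq\norm{\cdot}_\star\) together with \(\norm{x}_1=1\) is a pleasant shortcut, but the underlying computation is the same.
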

\begin{proof}
Let \(x\) be an element of \(\textrm{conv}(A)\). By the definition of \(\textrm{conv}(A)\), there is an integer \(n\geq 0\), an element \((\alpha_0, \ldots, \alpha_n)\) in the \(n\)-dimensional standard simplex \(\Delta^n\subset \R^{n+1}\) and \(n+1\) distinct integers \(l_0, \ldots, l_n\) such that \(x=\alpha_0T^{l_0}(x_0)+\dotsm +\alpha_nT^{l_n}(x_0)\). We have for every integer \(0 \leq i \leq n\) that the sequence \(T^{l_i}(x_0)\in \ell^1(\Z)\) is equal to one if \(k=l_i\) and  equal to zero if \(k\neq l_i\). Therefore, we compute
\begin{equation*}
\begin{split}
\left(\sum_{k\in \Z} \abs{x_k}\right)^2 
=\left(\sum_{i=0}^n \abs{\alpha_i} \right)^2
=1
\end{split}
\end{equation*}
and
\begin{equation*}
\begin{split}
\sum_{k\in \Z}\abs{x_k}^2
=\sum_{i=0}^n \alpha_i^2.
\end{split}
\end{equation*}
As a result, we obtain that 
\begin{equation*}
\begin{split}
1\leq\norm{x}_\star=\sqrt{1+ \sum_{i=0}^n \alpha_i^2} \leq \sqrt{2},
\end{split}
\end{equation*}
since we have \( \alpha_i^2 \leq \alpha_i \) for all integers \(0 \leq i \leq n\). 
\end{proof}
Let \(\overline{\textrm{conv}}(A)\) denote the closure of \(\textrm{conv}(A)\). By the use of Lemma \ref{LowerBound} we obtain that \(1 \leq \norm{x}_\star \leq \sqrt{2}\) for all points \(x\) in \(\overline{\textrm{conv}}(A)\). Thus, we have in particular that the zero sequence is not an element of \(\overline{\textrm{conv}}(A)\). A straightforward calculation shows that \(T(\overline{\textrm{conv}}(A))=\overline{\textrm{conv}}(A)\); hence, the map \(T\) is an isometry of the bounded metric space \((\overline{\textrm{conv}}(A), \norm{\cdot}_{\star})\) without fixed points. We claim that \((\overline{\textrm{conv}}(A),  \norm{\cdot}_{\star})\) is a complete Busemann space. It is well-known that every convex subset of a strictly convex normed vector space is a Busemann space, cf. Proposition 8.1.6 and Proposition 8.1.5 in \cite{PapadopoulosAthanase}. Hence, it follows that \((\overline{\textrm{conv}}(A), \norm{\cdot}_{\star})\) is a Busemann space, as \(\overline{\textrm{conv}}(A)\) is a convex subset of \(\ell^1(\Z)\). Note that \((\overline{\textrm{conv}}(A), \norm{\cdot}_{\star})\) is complete. Thus, we have constructed a complete bounded Busemann space that admits an isometry without fixed points.

\section{Existence of contracting barycenter maps}\label{six}
This section is divided into two parts. In the first part we build up prerequisite material from optimal transportation theory and in the second part we transfer a \(1\)-Lipschitz barycenter construction that traces back to A. Es-Sahib and H. Heinich, cf. \cite{heinich}, and A. Navas, cf. \cite{navas}, into the setting of metric spaces that admit a conical geodesic bicombing. The primary result of this section is Theorem \ref{navas1}. 
\subsection{Prerequisite material from optimal transportation theory}\label{introductionofnotions}
In this subsection, we collect some facts from the theory of optimal transportation. Let \((\mathfrak{X}, \mathcal{T}_{\hspace{-0.05em}\mathfrak{X}})\) be a Hausdorff topological space. We denote by \(\mathcal{B}_{\hspace{-0.05em}\mathfrak{X}}\) the Borel \(\sigma\)-algebra of \((\mathfrak{X}, \mathcal{T}_{\hspace{-0.05em}\mathfrak{X}})\) and by \(\mathcal{K}_{\hspace{-0.05em}\mathfrak{X}}\) the set that consists of all compact subsets of \((\mathfrak{X}, \mathcal{T}_{\hspace{-0.05em}\mathfrak{X}})\). A non-negative Borel measure \(\mu\colon \mathcal{B}_{\hspace{-0.05em}\mathfrak{X}}\to [0,+\infty]\) is called a \textit{Radon measure} if \(\mu(K)<+\infty\) for all compact subsets \(K\) of \((\mathfrak{X}, \mathcal{T}_{\hspace{-0.05em}\mathfrak{X}})\) and 
\begin{equation*}
\mu(B)=\sup\big\{ \mu(K) : K\subset B, K\in\mathcal{K}_{\hspace{-0.05em}\mathfrak{X}}\big\}
\end{equation*}
for all Borel measurable sets \(B\) of \((\mathfrak{X}, \mathcal{T}_{\hspace{-0.05em}\mathfrak{X}})\). A signed finite Borel measure \(\mu\colon \mathcal{B}_{\hspace{-0.05em}\mathfrak{X}}\to \R\) is called a signed finite Radon measure if the total variation \(\abs{\mu}\colon \mathcal{B}_{\hspace{-0.05em}\mathfrak{X}}\to [0,+\infty)\) is a Radon measure. Let \(P(\mathfrak{X})\) denote the set that consists of all non-negative Borel measures on \((\mathfrak{X}, \mathcal{B}_{\hspace{-0.05em}\mathfrak{X}})\) that are Radon probability measures. Suppose that \(f\colon \mathfrak{X}\to\mathfrak{X}\) is a continuous map. We define the map
\begin{equation*}
\begin{split}
&f_\ast\colon P(\mathfrak{X})\to P(\mathfrak{X}) \\
&\mu\mapsto 
\begin{cases}
f_\ast\mu\colon \mathcal{B}_{\hspace{-0.05em}\mathfrak{X}}\to [0,1] \\
B\mapsto \mu(f^{-1}(B)).
\end{cases}
\end{split}
\end{equation*}
The map \(f_\ast\) is well-defined and for every \(\mu\) in \(P(\mathfrak{X})\) the measure \(f_\ast\mu\) is called the \textit{pushforward of \(\mu\) by \(f\)}. A measure \(\mu\) in \(P(\mathfrak{X})\) is called \textit{\(f\)-invariant} if \(f_\ast\mu=\mu\).

For the rest of this subsection let \((X,d)\) be a metric space. Suppose that the Borel measure \(\mu\colon \mathcal{B}_X\to [0,1]\) is a Radon probability measure. The subset \(\textrm{spt}(\mu)\) of all points \(x\) in \(X\) such that \(\mu(U)>0\) for all open neighborhoods of \(x\) is called the \textit{support} of \(\mu\). We say that \(\mu\) has a \textit{finite first moment} if there is a point \(x_0\) in \(X\) such that 
\begin{equation*}
\int_X d(x,x_0)\,\mu(dx) < +\infty. 
\end{equation*}
We let \(P_1(X)\) be the set that consists of all measures of \((X, \mathcal{B}_X)\) that are Radon probability measures with a finite first moment. We denote by \(W^1\colon P_1(X)\times P_1(X)\to \R\) the first Wasserstein distance on \(P_1(X)\). Due to the Kantorovich-Rubinstein Duality Theorem the first Wasserstein distance \(W^1\) is given by
\begin{equation*}
W^1(\mu, \nu)=\sup\left\{ \int_X f\,d\mu-\int_X f\,d\nu \, : \, f\colon X\to \R \textrm{ is 1-Lipschitz } \right\}
\end{equation*}
and thus defines a metric on \(P_1(X)\), cf. \cite{edwards}. We define
\begin{equation*}
P_{\Q}(X):= \left\{ \sum_{k=1}^n \alpha_k \delta_{x_k} \, : \, n\geq 1, \sum_{k=1}^n \alpha_k=1, \alpha_k\in \Q_{\geq 0}, x_k\in X\right\}.
\end{equation*}
The set \(P_\Q(X)\) is  called  the  \textit{set  of  atomic  probability  measures with non-negative rational masses}. 
On \(P_\Q(X)\) there is an explicit formula for the first Wasserstein distance.
\begin{proposition}\label{formula}
Let \((X,d)\) denote a metric space. If \(n\geq 1\) is an integer and \(x_1, y_1,\ldots, x_n, y_n\) are points in \(X\), then we have
\begin{equation*}
W^1\left(\frac{1}{n}\left( \delta_{x_1}+\dotsm+\delta_{x_n}\right), \frac{1}{n}\left( \delta_{y_1}+\dotsm+\delta_{y_n} \right)\right)=\frac{1}{n}\min_{\tau\in S_n} \sum_{k=1}^n d(x_k, y_{\tau(k)}).
\end{equation*}
\end{proposition}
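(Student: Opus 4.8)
The plan is to recognise the left-hand side as the value of a finite linear program and to identify its extreme points by means of the Birkhoff--von Neumann theorem. Recall that \(W^1(\mu,\nu)\) is the infimum of \(\int_{X\times X} d(x,y)\,d\pi(x,y)\) taken over all couplings \(\pi\) of \(\mu\) and \(\nu\) (i.e.\ Borel probability measures on \(X\times X\) with first marginal \(\mu\) and second marginal \(\nu\)); the Kantorovich--Rubinstein formula recalled above is the associated dual problem. Write \(\mu=\frac1n(\delta_{x_1}+\dots+\delta_{x_n})\) and \(\nu=\frac1n(\delta_{y_1}+\dots+\delta_{y_n})\). Since the marginals of any coupling \(\pi\) are supported on the finite sets \(\{x_1,\dots,x_n\}\) and \(\{y_1,\dots,y_n\}\), the measure \(\pi\) is supported on their product, so the transport problem is a finite linear program and its infimum is attained.

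The key step is to rewrite this program over doubly stochastic matrices. A probability measure \(\gamma\) on \(\{1,\dots,n\}\times\{1,\dots,n\}\) whose two marginals are both uniform is exactly a matrix \(\gamma=\frac1n M\) with \(M=(M_{ij})\) an \(n\times n\) doubly stochastic matrix; pushing \(\gamma\) forward under \((i,j)\mapsto(x_i,y_j)\) yields a coupling of \(\mu\) and \(\nu\) of cost \(\frac1n\sum_{i,j} M_{ij}\,d(x_i,y_j)\). Conversely, every coupling of \(\mu\) and \(\nu\) arises this way: given \(\pi\), distribute, for each pair of points \((a,b)\), the mass \(\pi(\{(a,b)\})\) uniformly over the index pairs \((i,j)\) with \(x_i=a\) and \(y_j=b\); a short computation shows the resulting matrix is doubly stochastic and its pushforward equals \(\pi\) (this is immediate when the \(x_i\) are pairwise distinct and the \(y_j\) are pairwise distinct). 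Hence
\[
W^1(\mu,\nu)=\frac1n\min\Bigl\{\,\textstyle\sum_{i,j=1}^n M_{ij}\,d(x_i,y_j)\ :\ M\ \text{an}\ n\times n\ \text{doubly stochastic matrix}\Bigr\}.
\]

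To conclude, observe that \(M\mapsto\sum_{i,j}M_{ij}\,d(x_i,y_j)\) is linear and that, by the Birkhoff--von Neumann theorem, the set of \(n\times n\) doubly stochastic matrices is the convex hull of the permutation matrices \(\{P_\tau : \tau\in S_n\}\). Therefore the minimum above is attained at some \(P_\tau\), where its value is \(\sum_{k=1}^n d(x_k,y_{\tau(k)})\); minimising over \(\tau\in S_n\) yields the asserted equality. The only point that needs a little care is the reduction to the finite linear program together with the coupling-lifting step when points coincide; once the problem is recast over doubly stochastic matrices, the statement is precisely the fact that the optimal assignment problem is the extreme-point specialisation of optimal transport between two uniform empirical measures, and Birkhoff--von Neumann supplies the rest. (The inequality ``\(\leq\)'' also follows at once by exhibiting couplings: for each \(\tau\in S_n\), \(\frac1n\sum_{k=1}^n\delta_{(x_k,\,y_{\tau(k)})}\) is a coupling of \(\mu\) and \(\nu\) with cost \(\frac1n\sum_{k=1}^n d(x_k,y_{\tau(k)})\).)
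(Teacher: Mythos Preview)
Your argument is correct. The paper itself does not give a proof at all: it simply refers the reader to Villani's book, so there is no approach to compare against in any substantive sense. What you have written is the classical self-contained proof of this identity via the Birkhoff--von~Neumann theorem, and each step is sound: the reduction of the coupling problem to a finite linear program over doubly stochastic matrices is handled carefully (including the case of repeated points, where your uniform redistribution does indeed produce a doubly stochastic lift with the same cost and the same pushforward), and the final identification of the minimum with a permutation matrix is the standard extreme-point argument. One minor remark: since the paper defines \(W^1\) via the Kantorovich--Rubinstein dual formula rather than via couplings, you are implicitly invoking the equivalence of the primal and dual problems at the outset; you acknowledge this, and the paper already cites the duality theorem, so this is not a gap.
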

\begin{proof}
See \cite[p. 5]{villani2}.
\end{proof}
It turns out that the set \(P_\Q(X)\) is \(W^1\)-dense in \(P_1(X)\). This is the content of the following Proposition.
\begin{proposition}\label{density}
Let \((X,d)\) be a metric space and let \(\epsilon >0\) be a positive real number. If the Borel measure \(\mu\colon \mathcal{B}_X\to [0,1]\) is a Radon probability measure contained in \(P_1(X)\), then there exists a measure \(\nu_\epsilon\) contained in \(P_\Q(X)\) with \(\textrm{spt}(\nu_\epsilon)\subset \textrm{spt}(\mu)\) such that \(W^1(\mu, \nu_\epsilon)< \epsilon\).
\end{proposition}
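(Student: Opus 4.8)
The plan is to approximate $\mu$ in three successive steps, using throughout the Kantorovich--Rubinstein dual formula for $W^1$ recalled above, which converts every estimate into a supremum over $1$-Lipschitz functions and hence into elementary pointwise distance bounds. \emph{Step 1 (truncation).} Fix $x_0\in X$ with $g:=d(\cdot,x_0)$ $\mu$-integrable. Then $\mu(\{g>n\})\to 0$ and $\int_{\{g>n\}}g\,d\mu\to 0$ as $n\to+\infty$, so fix $n$ making both quantities small. The set $\{g\le n\}$ is closed, and by inner regularity of the Radon measure $\mu$ there is a compact $C\subset\{g\le n\}$ with $\mu(\{g\le n\}\setminus C)$ as small as we please; hence $C$ is compact with $\mu(X\setminus C)$ and $\int_{X\setminus C}g\,d\mu$ both arbitrarily small.

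\emph{Step 2 (discretization with real masses).} Since $C$ is compact it is totally bounded; I would cover it by finitely many balls of radius $\delta$, disjointify them inside $C$, and discard empty pieces, obtaining a Borel partition $C=B_1\cup\dots\cup B_{m}$ with $\mu(B_i)>0$ and $\operatorname{diam}B_i\le 2\delta$ for each $i$. Each $\overline{B_i}$ is a compact subset of $C$ of positive $\mu$-measure, so it meets $\textrm{spt}(\mu)$: otherwise finitely many open $\mu$-null sets would cover $\overline{B_i}$. Choosing $y_i\in\overline{B_i}\cap\textrm{spt}(\mu)$, define the probability measure $\mu':=\beta_1\delta_{y_1}+\sum_{i\ge 2}\mu(B_i)\delta_{y_i}$, where $\beta_1:=\mu(B_1)+\mu(X\setminus C)$, so that all mass outside $C$ is pushed onto $y_1$. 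For any $1$-Lipschitz $f$ one has $\int f\,d\mu-\int f\,d\mu'=\sum_i\int_{B_i}\big(f(x)-f(y_i)\big)\,d\mu(x)+\int_{X\setminus C}\big(f(x)-f(y_1)\big)\,d\mu(x)$; bounding $|f(x)-f(y_i)|\le d(x,y_i)\le 2\delta$ on $B_i$ and $|f(x)-f(y_1)|\le d(x,x_0)+d(x_0,y_1)\le g(x)+n$ on $X\setminus C$ yields $W^1(\mu,\mu')\le 2\delta+\int_{X\setminus C}g\,d\mu+n\,\mu(X\setminus C)$, which is $<\epsilon/2$ once $\delta$ and the quantities from Step 1 are small enough.

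\emph{Step 3 (rationalizing the masses).} Write $\mu'=\sum_{i=1}^{m'}\beta_i\delta_{y_i}$ with $\beta_i>0$ and $\sum_i\beta_i=1$. I would pick nonnegative rationals $\alpha_1,\dots,\alpha_{m'-1}$ with $\sum_{i<m'}\alpha_i\le 1$ and each $|\alpha_i-\beta_i|$ as small as desired, and set $\alpha_{m'}:=1-\sum_{i<m'}\alpha_i\in\Q_{\ge 0}$; then $\sum_i|\alpha_i-\beta_i|$ is small as well. Put $\nu_\epsilon:=\sum_{i=1}^{m'}\alpha_i\delta_{y_i}$, which lies in $P_{\Q}(X)$ and has $\textrm{spt}(\nu_\epsilon)\subset\{y_1,\dots,y_{m'}\}\subset\textrm{spt}(\mu)$. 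Since $\sum_i(\beta_i-\alpha_i)=0$, for $1$-Lipschitz $f$ we get $\int f\,d\mu'-\int f\,d\nu_\epsilon=\sum_i(\beta_i-\alpha_i)\big(f(y_i)-f(y_1)\big)$, hence $W^1(\mu',\nu_\epsilon)\le\big(\max_i d(y_i,y_1)\big)\sum_i|\beta_i-\alpha_i|$; as $\max_i d(y_i,y_1)\le\operatorname{diam}C\le 2n$ is a fixed finite number, choosing the rationals close enough makes this $<\epsilon/2$. The triangle inequality for $W^1$ then gives $W^1(\mu,\nu_\epsilon)<\epsilon$, which would complete the argument.

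The only point I expect to require care is the bookkeeping in Step 1: passing to a compact set of large measure is not enough by itself, because the tail of $\mu$ could still carry most of its first moment, so one must simultaneously control $\mu(X\setminus C)$ \emph{and} $\int_{X\setminus C}d(\cdot,x_0)\,d\mu$, and this is precisely where the finite--first--moment hypothesis enters. Everything else should be soft: total boundedness of $C$ gives the finite partition into small pieces, compactness of each $\overline{B_i}$ forces a support point into it (securing the inclusion $\textrm{spt}(\nu_\epsilon)\subset\textrm{spt}(\mu)$), and the mass perturbation is harmless because it only redistributes a small amount of mass within a fixed bounded region.
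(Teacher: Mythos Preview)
Your argument is correct. The three--step scheme (truncate to a compact set controlling both mass and first moment, discretize on that compact set, then perturb the weights to rationals) is the standard route, and you have handled the two delicate points properly: you force each atom $y_i$ into $\textrm{spt}(\mu)$ by the compactness argument on $\overline{B_i}$, and you keep track of the factor $n$ multiplying $\mu(X\setminus C)$ so that the order of choices ($n$ first, then $C$, then $\delta$, then the rationals) actually closes. One tiny remark: in Step~1 the quantity you ultimately need small is $n\,\mu(X\setminus C)$, not just $\mu(X\setminus C)$; this is implicit in your bookkeeping since $n\,\mu(\{g>n\})\le\int_{\{g>n\}}g\,d\mu$ and $n$ is fixed before $C$ is chosen, but it is worth saying once.

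The paper itself does not give a proof at all: it simply cites Theorem~6.1 in Edwards and Theorem~6.18 in Villani. So your write-up is strictly more informative than what appears in the paper. In particular, you establish the support inclusion $\textrm{spt}(\nu_\epsilon)\subset\textrm{spt}(\mu)$ by an explicit compactness argument, whereas the cited results are general density statements for which this refinement has to be checked separately. The trade-off is length: the paper's approach keeps the exposition lean by outsourcing a standard fact, while yours is self-contained and makes the role of the Radon hypothesis and the finite--first--moment hypothesis transparent.
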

\begin{proof}
See Theorem 6.1 in \cite{edwards} and Theorem 6.18 in \cite{villani}. 
\end{proof}
Suppose that the map \(\phi\colon X\to X\) is 1-Lipschitz. By the use of the Kantorovich-Rubinstein Duality Theorem it is readily verified that the map \(\phi_\ast\colon P_1(X)\to P_1(X)\) is well-defined and 1-Lipschitz as well.

\subsection{A 1-Lipschitz barycenter construction}
Let \((X,d)\) be a metric space. We follow K.-T. Sturm and say that a map \(\beta\colon (P_1(X), W^1)\to (X,d)\) is a \textit{contracting barycenter map} if \(\beta\) is 1-Lipschitz and if we have \(\beta(\delta_x)=x\) for all points \(x\) in \(X\), cf. \cite[Remark 6.4]{sturm}. If a metric space \((X,d)\) admits a contracting barycenter map \(\beta\colon P_1(X)\to X\), then \((X,d)\) admits a conical geodesic bicombing. Indeed, it is immediately verified that the map \(\sigma\colon X\times X\times [0,1] \to X\) given by the assignment \((x,y,t)\mapsto \beta((1-t)\delta_x + t\delta_y)\) defines a conical geodesic bicombing on \((X,d)\).

In this subsection we are interested in the reversed situation: Does every metric space that admits a conical geodesic bicombing admit a contracting barycenter map? It turns out that every complete metric space that admits a conical geodesic bicombing that has the midpoint property admits a contracting barycenter map, see Theorem \ref{navas1}.

In \cite{heinich}, A. Es-Sahib and H. Heinich developed a barycenter construction for non-empty finite subsets of separable complete Busemann spaces. Es-Sahib and Heinich's barycenter construction translates with no effort to complete metric spaces that admit a conical geodesic bicombing with the midpoint property. This is the content of the subsequent proposition.
\begin{proposition}\label{heinich}
Let \((X,d)\) be a complete metric space and let \(\sigma\colon X\times X\times [0,1]\to X\) denote a conical geodesic bicombing. If \(\sigma\) has the midpoint property, then there exists a collection \(\{b_n\colon X^n \to X\}_{n\geq 1}\) of maps that satisfies the following three conditions:
\begin{itemize}
\item(\textit{Locality}) For all integers \(n\geq 1\) and all points \(\mathbf{x}:=(x_1, \ldots, x_n)\) in \(X^n\) we have that the point \(b_n(\mathbf{x})\) is contained in \(\overline{\textrm{conv}_{\sigma}}\big(\big\{x_1, \ldots, x_n\big\}\big)\).
\item(\textit{Recursion}) For all integers \(n\geq 2\) and all points \(\mathbf{x}:=(x_1, \ldots, x_n)\) in \(X^n\) we have 
\begin{equation*}
b_n(\mathbf{x})=b_n(b_{n-1}(\mathbf{x}_1), \ldots, b_{n-1}(\mathbf{x}_n)),
\end{equation*}
where \(\mathbf{x}_k:=(x_1, \ldots, x_{k-1}, x_{k+1}, \ldots, x_n)\) for all integers \(1\leq k \leq n\).
\item(\textit{Nonexpansiveness}) For all integers \(n\geq 1\) and all points \(\mathbf{x}:=(x_1, \ldots, x_n)\) and \(\mathbf{y}:=(y_1, \ldots, y_n)\) in \(X^n\) it holds
\begin{equation*}
d(b_n(\mathbf{x}), b_n(\mathbf{y}))\leq \frac{1}{n}\min_{\tau \in S_n} \sum_{k=1}^n d(x_k, y_{\tau(k)}). 
\end{equation*}
\end{itemize}
\end{proposition}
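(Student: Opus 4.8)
The plan is to run the Es-Sahib--Heinich iteration, inducting on $n$. For the base cases put $b_1:=\id_X$ and $b_2(x_1,x_2):=\sigma_{x_1x_2}(\tfrac12)$. \emph{Locality} is clear for both; \emph{Recursion} for $b_2$ says $b_2(x_1,x_2)=b_2(x_2,x_1)$, i.e.\ $\sigma_{x_1x_2}(\tfrac12)=\sigma_{x_2x_1}(\tfrac12)$, which is precisely the midpoint property; and \emph{Nonexpansiveness} of $b_2$ follows by applying the defining inequality of $\sigma$ once with the identity matching and once after transposing both pairs (legitimate by the midpoint property), then taking the smaller of the two bounds. This is the only place the midpoint property enters.

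For the inductive step, fix $n\geq 3$ and assume $b_{n-1}$ has been built. Introduce the map $T\colon X^n\to X^n$, $T(\mathbf{x}):=(b_{n-1}(\mathbf{x}_1),\dots,b_{n-1}(\mathbf{x}_n))$, and the spread $\Delta(\mathbf{x}):=\max_{i,j}d(x_i,x_j)$. The first key point is the contraction estimate $\Delta(T\mathbf{x})\leq\tfrac1{n-1}\Delta(\mathbf{x})$: for fixed $i\neq j$, apply \emph{Nonexpansiveness} of $b_{n-1}$ to the tuples $\mathbf{x}_i$ and $\mathbf{x}_j$ using the matching of index sets that fixes every common index and sends $j$ to $i$; then all summands but one cancel and $d(b_{n-1}(\mathbf{x}_i),b_{n-1}(\mathbf{x}_j))\leq\tfrac1{n-1}d(x_i,x_j)$. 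Since $n\geq 3$ the factor $\tfrac1{n-1}\leq\tfrac12$, so $\Delta(T^m\mathbf{x})\to 0$ geometrically. (For $n=1,2$ there is nothing to iterate, since $T$ would be the identity, respectively the transposition, of the two coordinates; this is why those cases are handled by hand.)

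Next I would show $(T^m\mathbf{x})_{m\geq 0}$ converges coordinate-wise to a constant tuple. Every closed ball is $\sigma$-convex (apply the defining inequality of $\sigma$ with both primed points equal to the centre), hence so is $\overline{\textrm{conv}_{\sigma}}$ of any bounded set $C$, and consequently $\overline{\textrm{conv}_{\sigma}}(C)$ is contained in the closed ball of radius $\textrm{diam}(C)$ about any point of $C$. Combined with \emph{Locality} of $b_{n-1}$, this shows every coordinate of $T^{m+1}\mathbf{x}$ lies within $\Delta(T^m\mathbf{x})\leq 2^{-m}\Delta(\mathbf{x})$ of every coordinate of $T^m\mathbf{x}$; hence each coordinate sequence is Cauchy and, by completeness, converges, and all coordinates share a single limit $p$ because $\Delta(T^m\mathbf{x})\to 0$. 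Set $b_n(\mathbf{x}):=p$. Then \emph{Locality} of $b_n$ holds since, inductively in $m$ and using \emph{Locality} of $b_{n-1}$ together with the idempotence $\overline{\textrm{conv}_{\sigma}}\big(\overline{\textrm{conv}_{\sigma}}(A)\big)=\overline{\textrm{conv}_{\sigma}}(A)$ (itself immediate from continuity of $(x,y)\mapsto\sigma_{xy}(t)$), every coordinate of every $T^m\mathbf{x}$ lies in the closed set $\overline{\textrm{conv}_{\sigma}}(\{x_1,\dots,x_n\})$; and \emph{Recursion} is immediate, because the $T$-orbit of $T\mathbf{x}$ is the tail of that of $\mathbf{x}$, so $b_n(T\mathbf{x})=b_n(\mathbf{x})$.

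The remaining point, which I expect to be the crux, is \emph{Nonexpansiveness} of $b_n$. On $X^n$ consider $\bar\rho(\mathbf{x},\mathbf{y}):=\tfrac1n\min_{\tau\in S_n}\sum_{k=1}^n d(x_k,y_{\tau(k)})$; by Proposition \ref{formula} this is the pullback under $\mathbf{x}\mapsto\tfrac1n\sum_k\delta_{x_k}$ of $W^1$, in particular a pseudometric, and on two constant tuples $(p,\dots,p)$, $(q,\dots,q)$ it equals $d(p,q)$. So it suffices to prove $T$ is $\bar\rho$-nonexpansive and then let $m\to\infty$. After relabelling $\mathbf{y}$ by an optimal $\tau$ we may assume $\bar\rho(\mathbf{x},\mathbf{y})=\tfrac1n\sum_k d(x_k,y_k)$; applying \emph{Nonexpansiveness} of $b_{n-1}$ coordinate-wise with the identity matching gives $d\big((T\mathbf{x})_k,(T\mathbf{y})_k\big)\leq\tfrac1{n-1}\sum_{j\neq k}d(x_j,y_j)$, and summing over $k$ together with the identity $\sum_{k=1}^n\tfrac1{n-1}\sum_{j\neq k}a_j=\sum_{j=1}^n a_j$ yields $\sum_k d\big((T\mathbf{x})_k,(T\mathbf{y})_k\big)\leq\sum_j d(x_j,y_j)$, hence $\bar\rho(T\mathbf{x},T\mathbf{y})\leq\bar\rho(\mathbf{x},\mathbf{y})$. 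Iterating and passing to the limit gives $d(b_n(\mathbf{x}),b_n(\mathbf{y}))=\lim_m\bar\rho(T^m\mathbf{x},T^m\mathbf{y})\leq\bar\rho(\mathbf{x},\mathbf{y})=\tfrac1n\min_{\tau\in S_n}\sum_k d(x_k,y_{\tau(k)})$, closing the induction. The genuine obstacles are identifying the right pseudometric $\bar\rho$ and the two cancellation identities that let the spread contraction and the nonexpansiveness estimate propagate through the recursion; the isolated treatment of $n=1,2$ is a minor but necessary wrinkle.
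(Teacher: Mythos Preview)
Your proof is correct and follows exactly the Es-Sahib--Heinich iteration that the paper invokes; you have simply written out the convergence, locality, recursion, and nonexpansiveness verifications that the paper defers to \cite[Proposition~1]{heinich}. One minor point worth making explicit: your step ``after relabelling $\mathbf{y}$ by an optimal $\tau$'' tacitly uses that $T$ commutes with coordinate permutations (equivalently, that $b_{n-1}$ is permutation-invariant), which is an immediate consequence of the \emph{Nonexpansiveness} of $b_{n-1}$ in the induction hypothesis applied to a tuple and a permutation of itself.
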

\begin{proof}
Let \(b_1\) denote the identity map of \(X\) and define the map \(b_2\colon X^2\to X\) through the assignment \((x,y)\mapsto\sigma_{xy}(\frac{1}{2})\). It is straightforward to show that the map \(b_2\) satisfies the locality-, recursion-, and nonexpansiveness-condition. Now, we proceed by induction on \(n\geq 2\). Suppose that we have constructed a map \(b_n\colon X^n\to X\) that satisfies the locality-, recursion-, and nonexpansiveness-condition. Let \(\mathbf{x}\) be a point in \(X^{n+1}\). We define the sequence \((\mathbf{x}^{(k)})_{k\geq 1} \subset X^{n+1}\) via the recursive rule
\begin{equation*}
\mathbf{x}^{(1)}:=\mathbf{x}, \quad \mathbf{x}^{(k+1)}:=(b_n(\mathbf{x}^{(k)}_1), \ldots, b_n(\mathbf{x}^{(k)}_{n+1})),
\end{equation*}
where for each integer \(k\geq 1\) and each integer \(1 \leq l \leq n+1 \) the \(n\)-tupel \(\mathbf{x}^{(k)}_l\) is obtained from the \((n+1)\)-tupel \(\mathbf{x}^{(k)}\) by deleting the \(l\)-th entry. 
The exact same reasoning as in \cite[Proposition 1]{heinich} yields the existence of a point \(x^{\infty}\) in \(X\) such that \(\mathbf{x}^{(k)}\to (x^\infty, \ldots, x^\infty)\) with \(k\to \infty\). We set \(b_{n+1}(\mathbf{x}):=x^\infty\). It is possible to show that the map \(b_{n+1}\colon X^{n+1}\to X\) satisfies the locality-, recursion-, and nonexpansiveness-condition, cf. \cite[Proposition 1]{heinich}. 
\end{proof}
Let \((X, d)\) denote a complete metric space. Suppose that \((X,d)\) admits a conical geodesic bicombing \(\sigma\colon X\times X\times [0,1]\to X\) that has the midpoint property. Let \(\{b_n\colon X^n\to X\}_{n\geq 1}\) denote the collection of maps that we have constructed in Proposition \ref{heinich}, let \(n\geq 1\) be an integer and let \(\mathbf{x}\) be a point in \(X^n\). For every integer \(k\geq 1\) we denote by \(Q^k(\mathbf{x})\) the element in \(X^{kn}\) that is equal to \((\mathbf{x}, \ldots, \mathbf{x})\). It is tempting to assume that
\begin{equation}\label{itistempting}
b_{nk_1}(Q^{k_1}(\mathbf{x}))=b_{nk_2}(Q^{k_2}(\mathbf{x})) \quad \textrm{for all } k_1,k_2\geq 1. 
\end{equation}
However, this is not necessarily true. A counterexample can be found on page 614 in \cite{navas}.
Since the equality in \eqref{itistempting} does not hold in general, one might ask: does at least the limit 
\begin{equation}\label{navaslimit}
\lim\limits_{k\to +\infty} b_{nk}(Q^k(\mathbf{x}))
\end{equation}
exist? A. Navas showed that the limit \eqref{navaslimit} exists for all integers \(n\geq 1\) and all points \(\mathbf{x}\) in \(X^n\) if \(X\) is a complete separable Busemann space, cf. \cite[Proposition 1.2]{navas}. As Navas's proof relies solely on the fact that the collection \(\{b_n\colon X^n\to X\}_{n\geq 1}\) satisfies the recursion- and the nonexpansiveness-condition, Navas's proof translates verbatim to collections \(\{b_n\colon X^n\to X\}_{n\geq 1}\) that arose from complete metric spaces that admit a conical geodesic bicombing with the midpoint property. 

A streamlined version of Navas's proof can be found in D. Descombes's PhD thesis. If \(X\) satisfies a weak local compactness assumption, then it is possible to draw the conclusion that the limit in \eqref{navaslimit} exists via a martingale convergence theorem, cf. \cite[Theorem 2]{heinich}. Navas used the existence of the limit \eqref{navaslimit} to construct a contracting barycenter map for every complete separable Busemann space, cf. \cite{navas}. Essentially the same construction yields a contracting barycenter map for every complete metric space that admits a conical geodesic bicombing that has the midpoint property. 
\begin{theorem}\label{navas1} Let \((X,d)\) be a complete metric space and let \(\sigma\colon X\times X\times [0,1]\to X\) denote a conical geodesic bicombing. If \(\sigma\) has the midpoint property, then the map \(\beta_\sigma\colon P_\Q(X)\to X\) given by the assignment
\begin{equation}\label{baustelle}
\mu =\frac{1}{n} (\delta_{x_1}+\dotsm+\delta_{x_n}) \mapsto \lim_{k\to +\infty} b_{nk}(Q^k((x_1, \ldots, x_n)) 
\end{equation}
is well-defined and extends uniquely to a contracting barycenter map \(\beta_\sigma\colon P_1(X)\to X\) that has the following properties:
\begin{itemize}
\item(\textit{Locality}) For all measures \(\mu\) in \(P_1(X)\) we have that the point \(\beta_\sigma(\mu)\) is contained in \(\overline{\textrm{conv}_\sigma}(\textrm{spt}(\mu))\). 
\item(\textit{Equivariance}) If \(\phi\colon X\to X\) is a 1-Lipschitz and \(\sigma\)-equivariant self-map of \((X,d)\), then we have that \(\beta_\sigma\) is \(\phi\)-equivariant, that is, it holds \(\phi\circ\beta_{\sigma} = \beta_{\sigma}\circ \phi_\ast\). 
\end{itemize}
\end{theorem}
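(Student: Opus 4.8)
The plan is to verify the assertions of Theorem~\ref{navas1} in the order stated, working throughout with the collection $\{b_n\colon X^n\to X\}_{n\ge 1}$ produced in Proposition~\ref{heinich}. The one structural fact I would record first is that each $b_n$ is symmetric under permutations of its arguments: this follows from the nonexpansiveness clause, since choosing $\tau=\pi^{-1}$ in the estimate for $d\bigl(b_n(x_1,\dots,x_n),b_n(x_{\pi(1)},\dots,x_{\pi(n)})\bigr)$ makes the right-hand side zero. The existence of the limit in \eqref{navaslimit} for all $n\ge 1$ and all $\mathbf x\in X^n$ is already available, since, as recalled before the statement, Navas's argument uses only the recursion and nonexpansiveness clauses and so transfers verbatim. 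Hence the only substantive point in the well-definedness of \eqref{baustelle} is independence of the chosen representation of $\mu$ as a uniform average of Dirac masses. Suppose $\tfrac1n(\delta_{x_1}+\dots+\delta_{x_n})=\tfrac1m(\delta_{y_1}+\dots+\delta_{y_m})$, pick a common multiple $N$ of $n$ and $m$, and set $\mathbf a:=Q^{N/n}(\mathbf x)$ and $\mathbf b:=Q^{N/m}(\mathbf y)$ in $X^N$. Because $Q^k(\mathbf a)=Q^{kN/n}(\mathbf x)$, the sequence $k\mapsto b_{Nk}(Q^k(\mathbf a))$ is a subsequence of $\ell\mapsto b_{n\ell}(Q^\ell(\mathbf x))$ and therefore converges to $\lim_{\ell}b_{n\ell}(Q^\ell(\mathbf x))$; likewise $k\mapsto b_{Nk}(Q^k(\mathbf b))$ converges to $\lim_{\ell}b_{m\ell}(Q^\ell(\mathbf y))$. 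On the other hand $\mathbf a$ and $\mathbf b$ induce the same uniform atomic measure on $N$ points, so $\mathbf b$ is a permutation of $\mathbf a$, hence $Q^k(\mathbf b)$ is a permutation of $Q^k(\mathbf a)$, and by symmetry of $b_{Nk}$ these two sequences agree term by term. The two limits thus coincide, which is exactly the independence required.

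Granting well-definedness, the rest of the statement on $P_\Q(X)$ is short. For the $1$-Lipschitz bound I would pass once more to a common denominator $N$, writing $\mu=\tfrac1N\sum_i\delta_{a_i}$ and $\nu=\tfrac1N\sum_i\delta_{b_i}$; for each $k$, nonexpansiveness of $b_{Nk}$ together with Proposition~\ref{formula} gives
\[
d\bigl(b_{Nk}(Q^k(\mathbf a)),\,b_{Nk}(Q^k(\mathbf b))\bigr)\le\frac{1}{Nk}\min_{\tau\in S_{Nk}}\sum_{j=1}^{Nk}d\bigl((Q^k\mathbf a)_j,(Q^k\mathbf b)_{\tau(j)}\bigr)=W^1(\mu,\nu),
\]
and letting $k\to+\infty$ yields $d(\beta_\sigma(\mu),\beta_\sigma(\nu))\le W^1(\mu,\nu)$. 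For $\mu=\delta_x$ the locality clause of Proposition~\ref{heinich} forces $b_k(x,\dots,x)\in\overline{\textrm{conv}_\sigma}(\{x\})=\{x\}$, so $\beta_\sigma(\delta_x)=x$. As $(X,d)$ is complete and $P_\Q(X)$ is $W^1$-dense in $P_1(X)$ by Proposition~\ref{density}, the uniformly continuous map $\beta_\sigma$ extends uniquely to a $1$-Lipschitz map $\beta_\sigma\colon P_1(X)\to X$ that still fixes Dirac masses, i.e.\ to a contracting barycenter map. Locality on $P_1(X)$ follows from locality on $P_\Q(X)$: given $\mu\in P_1(X)$ and $\epsilon>0$, Proposition~\ref{density} provides $\nu_\epsilon\in P_\Q(X)$ with $\textrm{spt}(\nu_\epsilon)\subset\textrm{spt}(\mu)$ and $W^1(\mu,\nu_\epsilon)<\epsilon$; then $\beta_\sigma(\nu_\epsilon)\in\overline{\textrm{conv}_\sigma}(\textrm{spt}(\nu_\epsilon))\subset\overline{\textrm{conv}_\sigma}(\textrm{spt}(\mu))$ (monotonicity of $\textrm{conv}_\sigma$ being clear from its recursive definition), while $d(\beta_\sigma(\mu),\beta_\sigma(\nu_\epsilon))<\epsilon$, so $\beta_\sigma(\mu)$ lies in the closed set $\overline{\textrm{conv}_\sigma}(\textrm{spt}(\mu))$.

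For equivariance, let $\phi\colon X\to X$ be $1$-Lipschitz and $\sigma$-equivariant. I would first show by induction on $n$ that $\phi\bigl(b_n(x_1,\dots,x_n)\bigr)=b_n\bigl(\phi(x_1),\dots,\phi(x_n)\bigr)$. The case $n=1$ is trivial, and $n=2$ is precisely $\sigma$-equivariance since $b_2(x,y)=\sigma_{xy}(\tfrac12)$. For $n\to n+1$ one unwinds the construction of $b_{n+1}$ from $b_n$ in Proposition~\ref{heinich}: applying $\phi$ to every coordinate of the iterating sequence $(\mathbf x^{(k)})_k$ attached to $\mathbf x$ produces, by the inductive hypothesis applied to each deleted $n$-tuple, exactly the iterating sequence attached to $(\phi(x_1),\dots,\phi(x_{n+1}))$; continuity of $\phi$ then carries the common limit to the corresponding one, giving the identity for $b_{n+1}$. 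With this in hand, for $\mu=\tfrac1n(\delta_{x_1}+\dots+\delta_{x_n})\in P_\Q(X)$ we have $\phi_\ast\mu=\tfrac1n\sum_i\delta_{\phi(x_i)}$, and since $Q^k\bigl((\phi(x_1),\dots,\phi(x_n))\bigr)$ is obtained from $Q^k(\mathbf x)$ by applying $\phi$ to each coordinate,
\[
\beta_\sigma(\phi_\ast\mu)=\lim_{k\to+\infty}b_{nk}\bigl(Q^k((\phi(x_1),\dots,\phi(x_n)))\bigr)=\lim_{k\to+\infty}\phi\bigl(b_{nk}(Q^k(\mathbf x))\bigr)=\phi(\beta_\sigma(\mu)),
\]
using continuity of $\phi$ in the last step. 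Thus $\phi\circ\beta_\sigma$ and $\beta_\sigma\circ\phi_\ast$ agree on $P_\Q(X)$; both are continuous on $P_1(X)$ — the second because $\phi_\ast\colon P_1(X)\to P_1(X)$ is $1$-Lipschitz and $\beta_\sigma$ is continuous — so by density they agree on all of $P_1(X)$.

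I expect the genuinely delicate step to be the well-definedness of \eqref{baustelle}; everything else is either already cited (existence of the limit \eqref{navaslimit}) or a routine combination of Proposition~\ref{formula}, the density statement of Proposition~\ref{density}, and extension of uniformly continuous maps to the completion. The crux of well-definedness is the interplay between the symmetry of the maps $b_n$ and the observation that going to a common denominator exhibits $k\mapsto b_{Nk}(Q^k(\cdot))$ as a subsequence of the defining sequence, so that only the convergence already known from Navas's argument is needed.
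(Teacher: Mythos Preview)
Your proposal is correct and follows essentially the same route as the paper's proof: nonexpansiveness plus Proposition~\ref{formula} for the Lipschitz bound on $P_\Q(X)$, extension by the density statement of Proposition~\ref{density}, locality via the support-containment clause of that same proposition, and the inductive commutation $\phi\circ b_n=b_n\circ\boldsymbol\phi$ for equivariance. The main difference is that you spell out the well-definedness step (symmetry of $b_n$ from nonexpansiveness, common-denominator plus subsequence argument), which the paper leaves as ``readily verified''.
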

\begin{proof}
It is readily verified that the map \(\beta_\sigma\colon P_\Q(X)\to X\) is well-defined, that is, the assignment \eqref{baustelle} does not depend on the representation of \(\mu\). Let \(\mu\) and \(\nu\) denote two elements of \(P_{\Q}(X)\). Note that there is an integer \(n\geq 1\) and points \((x_1, \ldots, x_n)\) and \((y_1, \ldots, y_n)\) in \(X^n\) such that  \(\mu =\frac{1}{n} (\delta_{x_1}+\dotsm+\delta_{x_n}) \textrm{ and } \nu =\frac{1}{n} (\delta_{y_1}+\dotsm+\delta_{y_n}).\)
Due to Equation \eqref{baustelle}, the nonexpansiveness condition and Proposition \ref{formula} we have 
\begin{equation*}
d(\beta_\sigma(\mu), \beta_\sigma(\nu)) \leq \frac{1}{n} \min_{\tau\in S_n} \sum_{k=1}^nd(x_k,y_{\tau(k)})=W^1(\mu, \nu);
\end{equation*}
hence, the map \(\beta_\sigma\colon P_\Q(X)\to X\) is 1-Lipschitz. Proposition \ref{density} tells us that \(P_{\Q}(X)\) is dense in \((P_1(X), W^1)\); thus, as \(X\) is complete the map \(\beta_\sigma\colon P_\Q(X)\to X\) extends uniquely to the whole space \(P_1(X)\). We denote this map again by \(\beta_\sigma\). Note that the extended map \(\beta_\sigma\) is 1-Lipschitz by construction and we have \(\beta_\sigma(\delta_x)=x\) for all points \(x\) in \(X\); hence, the map \(\beta_\sigma\) is a contracting barycenter map on \((X,d)\). 

The fact that the point \(\beta_\sigma(\mu)\) is contained in \(\overline{\textrm{conv}_\sigma}(\textrm{spt}(\mu))\) for all measures \(\mu\) in \(P_1(X)\) is a direct consequence of Proposition \ref{density}.

To conclude the proof we show that if \(\phi\colon X\to X\) is a 1-Lipschitz and \(\sigma\)-equivariant self-map of \((X,d)\), then we have that \(\beta_\sigma\) is \(\phi\)-equivariant.  As the map \(\phi\) is \(\sigma\)-equivariant, we obtain that \(\phi(b_2(x,y))=b_2(\phi(x), \phi(y))\) for all points \(x,y\) in \(X\). A straightforward induction shows for all integers \(n\geq 2\) and all points \(\mathbf{x}\) in \(X^n\) that 
\begin{equation}\label{happy}
\phi(b_n(\mathbf{x}))=b_n(\boldsymbol\phi (\mathbf{x})),
\end{equation}
where the map \(\boldsymbol\phi\colon X^n \to X^n\) is given by the assignment \((x_1,\ldots, x_n)\mapsto (\phi(x_1), \ldots, \phi(x_n))\). Suppose that \(\mu\) is a measure in \(P_\Q(X)\). There is an integer \(n\geq 1\) and a point \((x_1, \ldots, x_n)\) in \(X^n\) such that \(\mu=\frac{1}{n}(\delta_{x_1}+\dotsm+\delta_{x_n})\). Note that \(\phi_\ast\mu=\frac{1}{n}\left(\delta_{\phi(x_1)}+\dotsm+\delta_{\phi(x_n)}\right)\). We compute
\begin{equation*}
\begin{split}
\phi(\beta_\sigma(\mu))=\lim_{k\to+\infty} \phi\left(b_{nk}(Q^k(\mathbf{x}))\right)\overset{\eqref{happy}}{=}\lim_{k\to +\infty} b_{nk}(Q^k(\boldsymbol\phi(\mathbf{x})))=\beta_{\sigma}(\phi_\ast\mu).
\end{split}
\end{equation*}
Since the two \(1\)-Lipschitz maps \(\phi\circ\beta_\sigma\) and \(\beta_\sigma\circ \phi_\ast \) agree on the \(W^1\)-dense subset \(P_\Q(X)\subset P_1(X)\), we obtain that they coincide on the whole space \(P_1(X)\). The Theorem follows.
\end{proof}
We call the map \(\beta_\sigma\) from Theorem \ref{navas1} the \emph{contracting barycenter map associated to} \(\sigma\). The rest of this section is devoted to contracting barycenter maps on Banach spaces. In the subsequent proposition we show that there is precisely one contracting barycenter map on a Banach space.
\begin{proposition}\label{uniqueness} Let \((E, \norm{\cdot})\) be a Banach space, let \(\lambda\) be the conical geodesic bicombing on \(E\) that consists of the linear geodesics and let \(\beta_\lambda\colon P_1(E)\to E\) denote the contracting barycenter map associated to \(\lambda\). It holds that the map \(\beta_\lambda\colon P_1(E)\to E\) is given through the assignment
\begin{equation}
\mu \mapsto \int_{E} x \,d\mu(x)
\end{equation}
and that the map \(\beta_\lambda\) is the only contracting barycenter map on \((E, \norm{\cdot})\).
\end{proposition}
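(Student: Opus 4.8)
The plan is to treat the two assertions in turn.

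\medskip
\noindent\textbf{The formula.} I would first pin down the Es-Sahib--Heinich--Navas maps \(b_n\colon E^n\to E\) attached by Proposition~\ref{heinich} to the linear bicombing \(\lambda\): I claim \(b_n(x_1,\dots,x_n)=\tfrac1n(x_1+\dots+x_n)\) for all \(n\ge1\). This is an induction: \(b_1=\id\) and \(b_2(x,y)=\lambda_{xy}(\tfrac12)=\tfrac12(x+y)\); for the step \(n\to n+1\) one simply inspects the iteration defining \(b_{n+1}\). Writing \(\mathbf{x}^{(k)}=(a_1^{(k)},\dots,a_{n+1}^{(k)})\), the recursion reads \(a_l^{(k+1)}=b_n(\mathbf{x}^{(k)}_l)\), which by the inductive hypothesis equals \(\tfrac1n\bigl(S^{(k)}-a_l^{(k)}\bigr)\) with \(S^{(k)}:=\sum_j a_j^{(k)}\); summing over \(l\) shows \(S^{(k)}\equiv S:=x_1+\dots+x_{n+1}\), so each coordinate follows the affine contraction \(a\mapsto \tfrac Sn-\tfrac1n a\) (ratio \(\tfrac1n\le\tfrac12<1\), as \(n\ge2\)), whose fixed point is \(\tfrac{S}{n+1}\); hence \(\mathbf{x}^{(k)}\to(\tfrac{S}{n+1},\dots,\tfrac{S}{n+1})\) and \(b_{n+1}(\mathbf{x})=\tfrac1{n+1}\sum_i x_i\). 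Feeding this into \eqref{baustelle}, for \(\mu=\tfrac1n(\delta_{x_1}+\dots+\delta_{x_n})\) one gets \(b_{nk}(Q^k(\mathbf{x}))=\tfrac1{nk}\,k\sum_i x_i=\tfrac1n\sum_i x_i\) for every \(k\), so \(\beta_\lambda(\mu)=\tfrac1n\sum_i x_i=\int_E x\,d\mu(x)\) on \(P_\Q(E)\).

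\medskip
\noindent It remains to identify the extension. The Bochner integral \(B(\mu):=\int_E x\,d\mu(x)\) is well defined on \(P_1(E)\): a Radon probability measure on the metric space \(E\) is tight, hence concentrated on a \(\sigma\)-compact, in particular separable, subset, so \(x\mapsto x\) is strongly \(\mu\)-measurable, and \(\int_E\|x\|\,d\mu<+\infty\) because \(\mu\in P_1(E)\). The map \(B\) fixes Diracs, and it is \(1\)-Lipschitz for \(W^1\): for \(\xi\in E^\ast\) with \(\|\xi\|\le1\) the function \(\langle\xi,\cdot\rangle\) is \(1\)-Lipschitz, so by Kantorovich--Rubinstein \(\langle\xi,B(\mu)-B(\nu)\rangle=\int\langle\xi,x\rangle\,d\mu-\int\langle\xi,x\rangle\,d\nu\le W^1(\mu,\nu)\), and the supremum over such \(\xi\) (Hahn--Banach) gives \(\|B(\mu)-B(\nu)\|\le W^1(\mu,\nu)\). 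Thus \(B\) is a contracting barycenter map agreeing with \(\beta_\lambda\) on the \(W^1\)-dense set \(P_\Q(E)\) (Proposition~\ref{density}); since \(\beta_\lambda\) is \emph{the} \(1\)-Lipschitz extension of \(\beta_\lambda|_{P_\Q(E)}\), we conclude \(B=\beta_\lambda\).

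\medskip
\noindent\textbf{Uniqueness.} Let \(\beta\colon P_1(E)\to E\) be any contracting barycenter map. Since \(\beta\) and \(B\) are both \(1\)-Lipschitz and \(P_\Q(E)\) is \(W^1\)-dense, it suffices to prove \(\beta=B\) on \(P_\Q(E)\). Fix \(\mu=\tfrac1n\sum_i\delta_{x_i}\), put \(b:=\beta(\mu)\), \(\bar x:=B(\mu)=\tfrac1n\sum_i x_i\). Testing the \(1\)-Lipschitz inequality against the Dirac masses gives
\[
\|b-z\|\ \le\ W^1(\mu,\delta_z)\ =\ \frac1n\sum_{i=1}^n\|x_i-z\|\qquad\text{for every }z\in E .
\]
The heart of the matter is the convex-geometric claim that \(\bar x\) is the \emph{only} point of \(E\) satisfying this family of inequalities; granting it, \(\beta(\mu)=\bar x=B(\mu)\). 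One inclusion is Jensen. For the converse: the same inequalities force \(b\in\overline{\textrm{conv}}(\{x_1,\dots,x_n\})\) (otherwise a separating functional coming from a point where the norm is Gâteaux differentiable contradicts them once the test point is sent to infinity), so everything takes place in the finite-dimensional space \(F:=\textrm{span}(x_1,\dots,x_n,\bar x)\). For a unit vector \(\eta\in F\), insert \(z=\bar x-t\eta\) and let \(t\to+\infty\): since \(\|v+t\eta\|-t\) decreases to the one-sided directional derivative \(D\|\eta\|(v)=\lim_{s\downarrow0}\tfrac{\|\eta+sv\|-\|\eta\|}{s}\), monotone convergence turns the inequality into \(D\|\eta\|(b-\bar x)\le\tfrac1n\sum_i D\|\eta\|(x_i-\bar x)\). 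Choosing \(\eta\) at a differentiability point of \(\|\cdot\|_F\) makes \(D\|\eta\|=\langle\nabla\|\eta\|,\cdot\rangle\) linear, so the right-hand side is \(\langle\nabla\|\eta\|,\tfrac1n\sum_i(x_i-\bar x)\rangle=0\); applied to \(\eta\) and to \(-\eta\) this gives \(\langle\nabla\|\eta\|,b-\bar x\rangle=0\), and since differentiability points of the norm are dense in \(F\) with gradients spanning \(F^\ast\), we get \(b=\bar x\).

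\medskip
\noindent\textbf{Where the work is.} The two places I expect to cost effort are exactly the geometric steps of the last paragraph: (i) promoting an arbitrary contracting barycenter map to a \emph{local} one (i.e.\ \(\beta(\mu)\) lies in the convex hull of \(\textrm{spt}(\mu)\)), which is what lets one descend to a finite-dimensional subspace and must be extracted from the Dirac inequalities alone; and (ii) the passage to differentiability points of the norm, which is comfortable in finite dimensions (Rademacher) but which one would like stated so as to cover every Banach space uniformly, including non-smooth ones such as \(\ell^{1}\) — where the limit \(z\to\infty\) argument has to be replaced by a direct choice of finite test points \(z\) adapted to \(\textrm{spt}(\mu)\). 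Everything else — the induction for \(b_n\), the properties of \(B\), and the density/extension arguments — is routine.
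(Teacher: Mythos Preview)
Your argument is correct, but the paper takes a much shorter route. Rather than computing the maps \(b_n\) or touching the recursive construction at all, the paper observes that \emph{any} contracting barycenter map \(\beta\) satisfies
\[
\|\beta(\mu)-y\|\le W^1(\mu,\delta_y)=\int_E\|x-y\|\,d\mu(x)\qquad(y\in E),
\]
verifies that \(\id_E\) is Bochner integrable against each \(\mu\in P_1(E)\) (separable support, Pettis measurability, finite first moment), and then quotes a single external result --- Theorem~3.6 in Molchanov --- asserting that the only point of \(E\) obeying this family of inequalities is \(\int_E x\,d\mu(x)\). This delivers the formula for \(\beta_\lambda\) and uniqueness simultaneously, in a few lines. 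Your route is more self-contained: you pin down \(b_n\) explicitly, identify \(\beta_\lambda\) with the Bochner integral by density, and then essentially reprove Molchanov's characterization for finitely supported \(\mu\) via directional derivatives and Rademacher. That buys independence from the external citation at the price of length.

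One remark on your ``where the work is'' paragraph: the gap you flag under~(i) is illusory, and with it your worry~(ii) evaporates. You do not need locality to descend to finite dimensions --- simply set \(F:=\operatorname{span}(x_1,\dots,x_n,b)\). This is finite-dimensional, contains all the test points \(z=\bar x-t\eta\) for \(\eta\in F\), and your \(t\to\infty\) argument runs entirely inside \((F,\|\cdot\|_F)\); Rademacher then applies with no further assumption on \(E\). The final step, that \(\langle\nabla\|\eta\|_F,b-\bar x\rangle=0\) for a.e.\ \(\eta\) forces \(b=\bar x\), follows because for any nonzero \(w\in F\) and any differentiability point \(\eta\) with \(\|\eta-w\|<\|w\|/3\) one has \(\langle\nabla\|\eta\|,w\rangle\ge\|\eta\|-\|\eta-w\|>0\), and such \(\eta\) exist by density.
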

\begin{proof} Suppose that \(\beta\colon P_1(E)\to E\) is a contracting barycenter map on \((E, \norm{\cdot})\). Let \(\mu\) be a measure contained in \(P_1(E)\). The point \(\beta(\mu)\) satisfies 
\begin{equation}\label{water}
\norm{\beta(\mu)-y} \leq W^1(\mu, \delta_y)=\int_E \norm{ x-y}\,d\mu(x) \quad \textrm{ for all } y\in E.
\end{equation} 
It is well-known that \(\textrm{spt}(\mu)\) is separable and that \(\mu(E\setminus \textrm{spt}(\mu))=0\); hence, the identity map \(\textrm{id}\colon (E,\mathcal{B}_E)\to (E, \mathcal{B}_E)\) is \(\mu\)-essentially separably valued. Now, Pettis Measurability Theorem, cf. \cite[p. 124]{lang}, tells us that the identity map \(\textrm{id}\) is \(\mu\)-measurable. Hence, we can use the definition of \(P_1(E)\) and Bochner's criterion for integrability, cf. \cite[p. 142]{lang}, to deduce that the identity map \(\textrm{id}\) is Bochner integrable with respect to the measure \(\mu\). Thus, as the point \(\beta(\mu)\) satisfies the inequality \eqref{water}, Theorem 3.6 in \cite{molchanov} tells us that
\begin{equation}\label{hhighh}
\beta(\mu)=\int_E \textrm{id}(x) \,d\mu(x).
\end{equation}
Since the map  \(\beta_\lambda\) is a contracting barycenter map, we have shown that \(\beta_\lambda\) is given through the assignment \eqref{hhighh}. Furthermore, as the contracting barycenter map \(\beta\) was arbitrary, we have also shown that \(\beta_\lambda\) is the unique contracting barycenter map on \((E, \norm{\cdot})\). 
\end{proof}
Having Theorem \ref{navas1} and Proposition \ref{uniqueness} on hand we can deduce the following corollary.
\begin{corollary}\label{consequence}
Let \((E, \norm{\cdot})\) be a Banach space. If \(\mu\) is a measure in \(P_1(E)\), then the Bochner integral \(\int_E x\,d\mu(x)\) is contained in the closure of the convex hull of \(\textrm{spt}(\mu)\).
\end{corollary}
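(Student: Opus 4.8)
The plan is to deduce the corollary by running Theorem \ref{navas1} and Proposition \ref{uniqueness} together for the linear geodesic bicombing \(\lambda\) on \(E\). First I would check that \(\lambda\) satisfies the hypotheses of Theorem \ref{navas1}: a Banach space is complete; \(\lambda\) is a conical geodesic bicombing, because every convex subset of a normed vector space carries the conical geodesic bicombing given by its linear geodesics; and \(\lambda\) has the midpoint property, since \(\lambda_{xy}(\tfrac12)=\tfrac12(x+y)=\lambda_{yx}(\tfrac12)\) for all \(x,y\in E\). Theorem \ref{navas1} then furnishes the contracting barycenter map \(\beta_\lambda\colon P_1(E)\to E\) together with its Locality property: \(\beta_\lambda(\mu)\in\overline{\textrm{conv}_\lambda}(\textrm{spt}(\mu))\) for every \(\mu\) in \(P_1(E)\).

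The second step is to recognise both sides of this membership in classical terms. By Proposition \ref{uniqueness} the map \(\beta_\lambda\) is given by \(\beta_\lambda(\mu)=\int_E x\,d\mu(x)\). It then remains only to see that for an arbitrary subset \(A\subset E\) the \(\lambda\)-convex hull \(\textrm{conv}_\lambda(A)\) coincides with the ordinary convex hull \(\textrm{conv}(A)\), so that \(\overline{\textrm{conv}_\lambda}(A)=\overline{\textrm{conv}}(A)\). The inclusion \(\textrm{conv}_\lambda(A)\subset\textrm{conv}(A)\) is a one-line induction on the sets \(A_k\) defining \(\textrm{conv}_\lambda(A)\), since each \(\lambda_{xy}(t)=(1-t)x+ty\) is a convex combination of points of \(\textrm{conv}(A)\). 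For the reverse inclusion I would prove by induction on \(m\) that every convex combination \(\sum_{i=1}^m\alpha_i x_i\) of points of \(A\) lies in \(A_m\): the case \(\alpha_m=1\) is immediate, and otherwise one writes the combination as \(\lambda_{p,x_m}(\alpha_m)\) with \(p:=\sum_{i=1}^{m-1}\tfrac{\alpha_i}{1-\alpha_m}x_i\), which lies in \(A_{m-1}\) by the inductive hypothesis, while \(x_m\in A_1\subset A_{m-1}\), so that \(\lambda_{p,x_m}(\alpha_m)\in A_m\). Hence \(\textrm{conv}(A)\subset\bigcup_{m\geq 1}A_m=\textrm{conv}_\lambda(A)\), and the two hulls agree.

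Combining the two steps with \(A=\textrm{spt}(\mu)\) yields \(\int_E x\,d\mu(x)=\beta_\lambda(\mu)\in\overline{\textrm{conv}_\lambda}(\textrm{spt}(\mu))=\overline{\textrm{conv}}(\textrm{spt}(\mu))\), which is exactly the assertion. I do not anticipate any genuine obstacle: all of the analytic content has been absorbed into Theorem \ref{navas1} and Proposition \ref{uniqueness}, and the only residual task is the elementary identification \(\textrm{conv}_\lambda=\textrm{conv}\), whose sole mild subtlety is the bookkeeping of the degenerate convex combinations (\(\alpha_m\in\{0,1\}\)) in the induction.
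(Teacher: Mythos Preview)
Your proposal is correct and follows exactly the paper's approach: the paper's proof is the single sentence ``This is a consequence of Theorem \ref{navas1} and Proposition \ref{uniqueness}.'' You have simply spelled out the implicit details, in particular the elementary identification \(\textrm{conv}_\lambda=\textrm{conv}\) for the linear bicombing.
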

\begin {proof}
This is a consequence of Theorem \ref{navas1} and Proposition \ref{uniqueness}. 
\end{proof} 
\section{Existence of invariant measures}\label{seven}
The primary result of this section, Theorem \ref{mainTheorem}, is proved in Subsection \ref{mainresultofthissection}. In Subsection \ref{maxval}, we introduce some results that we will need in order to derive Theorem \ref{mainTheorem}.  

\subsection{Maximal values of generalized limits}\label{maxval}
Let \(\Sigma\) denote a countable semigroup. A sequence \((\Sigma_k)_{k\geq 1}\) of non-empty finite subsets of \(\Sigma\) is a \textit{Følner sequence} if 
\begin{equation*}
\lim_{k\to +\infty} \frac{\abs{s\Sigma_k\,\triangle\,\Sigma_k}}{\abs{\Sigma_k}}=0
\end{equation*}
for all \(s\) in \(\Sigma\). Recall that the sequence \((\{0, \ldots, k-1\})_{k\geq 1}\) is a Følner sequence of the semigroup of the non-negative integers. A positive linear functional \(\Theta\colon \ell^\infty(\Sigma)\to \R\) is called \textit{generalized limit} if \(\Theta((1)_{g\in G})=1\) and \(\Theta((x_s)_{s\in \Sigma})=\Theta((x_{s_0s})_{s\in G})\) for all \(s_0\) in \(\Sigma\) and for all \(x\) in \(\ell^\infty(\Sigma)\). For convenience, we use the notation \(\mathbf{1}:=(1)_{g\in G}\) and \(s_0.x:=(x_{s_0s})_{s\in \Sigma}\) for all \(s_0\) in \(\Sigma\) and for all \(x\) in \(\ell^\infty(\Sigma)\). The subsequent Lemma is an extension of Theorem 1 in \cite{sucheston}.

\begin{lemma}\label{banachmazur} Let \(\Sigma\) be a countable semigroup and let \((\Sigma_k)_{k\geq 1}\) denote a Følner sequence of \(\Sigma\). Suppose that \(\Theta\colon \ell^\infty(\Sigma)\to \R\) is a linear functional. Then the following statements are equivalent:
\begin{enumerate}
\item The linear functional \(\Theta\colon \ell^\infty(\Sigma)\to \R\) is positive and a generalized limit.
\item The inequality 
\begin{equation*}
\Theta(x) \leq \liminf_{k\to +\infty}\left( \sup_{s\in \Sigma} \frac{1}{\abs{\Sigma_k}}\sum_{h\in \Sigma_k} x_{hs}\right)
\end{equation*}
holds for all points \(x\) in \(\ell^\infty(\Sigma)\). 
\item The inequality 
\begin{equation*}
\Theta(x) \leq \limsup_{k\to +\infty}\left( \sup_{s\in \Sigma} \frac{1}{\abs{\Sigma_k}}\sum_{h\in \Sigma_k} x_{hs}\right)
\end{equation*}
holds for all points \(x\) in \(\ell^\infty(\Sigma)\).
\end{enumerate}
\end{lemma}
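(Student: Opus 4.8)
The plan is to establish the cycle of implications $(1)\Rightarrow(2)\Rightarrow(3)\Rightarrow(1)$. It is convenient to introduce, for each integer $k\geq1$ and each $x\in\ell^\infty(\Sigma)$, the averaged sequence $A_k(x)\in\ell^\infty(\Sigma)$ defined by $A_k(x)_s:=\frac{1}{\abs{\Sigma_k}}\sum_{h\in\Sigma_k}x_{hs}$; note that $A_k(x)=\frac{1}{\abs{\Sigma_k}}\sum_{h\in\Sigma_k}h.x$, that $\norm{A_k(x)}_\infty\leq\norm{x}_\infty$, and that the functionals $p(x):=\limsup_{k\to+\infty}\sup_{s\in\Sigma}A_k(x)_s$ and $q(x):=\liminf_{k\to+\infty}\sup_{s\in\Sigma}A_k(x)_s$ satisfy $q(x)\leq p(x)$ for every $x$. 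With this notation statement $(2)$ reads $\Theta(x)\leq q(x)$ for all $x$ and statement $(3)$ reads $\Theta(x)\leq p(x)$ for all $x$, so that $(2)\Rightarrow(3)$ is immediate.

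For $(1)\Rightarrow(2)$ suppose that $\Theta$ is a positive generalized limit. Shift-invariance gives $\Theta(h.x)=\Theta(x)$ for every $h\in\Sigma$, hence by linearity $\Theta(A_k(x))=\frac{1}{\abs{\Sigma_k}}\sum_{h\in\Sigma_k}\Theta(h.x)=\Theta(x)$ for every $k$. Since $\Theta$ is positive and $\Theta(\mathbf{1})=1$, applying positivity to the non-negative element $(\sup_{s}y_s)\mathbf{1}-y$ yields $\Theta(y)\leq\sup_{s\in\Sigma}y_s$ for every $y\in\ell^\infty(\Sigma)$. Taking $y=A_k(x)$ gives $\Theta(x)=\Theta(A_k(x))\leq\sup_{s\in\Sigma}A_k(x)_s$ for every $k$, and passing to the limit inferior in $k$ proves $\Theta(x)\leq q(x)$.

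For $(3)\Rightarrow(1)$ suppose that $\Theta(x)\leq p(x)$ for all $x$. If $x\geq0$ then $A_k(-x)_s\leq0$ for all $k$ and $s$, so $p(-x)\leq0$ and therefore $\Theta(x)=-\Theta(-x)\geq-p(-x)\geq0$; thus $\Theta$ is positive. Since $p(\mathbf{1})=1$ and $p(-\mathbf{1})=-1$ we obtain $\Theta(\mathbf{1})\leq1$ and $-\Theta(\mathbf{1})=\Theta(-\mathbf{1})\leq-1$, whence $\Theta(\mathbf{1})=1$. It remains to prove $\Theta(s_0.x)=\Theta(x)$ for every $s_0\in\Sigma$; applying the hypothesis to $s_0.x-x$ and to $x-s_0.x$, it suffices to show $p(s_0.x-x)\leq0$ and $p(x-s_0.x)\leq0$, and for this it suffices to show
\[
\lim_{k\to+\infty}\;\sup_{s\in\Sigma}\;\abs{A_k(s_0.x)_s-A_k(x)_s}=0 .
\]
This is where the Følner condition enters: writing $m_k(g):=\abs{\{h\in\Sigma_k:s_0h=g\}}$ one has $A_k(s_0.x)_s=\frac{1}{\abs{\Sigma_k}}\sum_{g\in s_0\Sigma_k}m_k(g)\,x_{gs}$, and a direct comparison with $A_k(x)_s=\frac{1}{\abs{\Sigma_k}}\sum_{g\in\Sigma_k}x_{gs}$ — using the identity $\sum_{g\in s_0\Sigma_k}(m_k(g)-1)=\abs{\Sigma_k}-\abs{s_0\Sigma_k}\leq\abs{s_0\Sigma_k\,\triangle\,\Sigma_k}$ to absorb the multiplicities — bounds the left-hand side above by a fixed constant multiple of $\norm{x}_\infty\,\abs{s_0\Sigma_k\,\triangle\,\Sigma_k}\big/\abs{\Sigma_k}$, which tends to $0$. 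Consequently $\Theta(s_0.x-x)\leq0$ and $\Theta(x-s_0.x)\leq0$, that is, $\Theta(s_0.x)=\Theta(x)$, so $\Theta$ is a generalized limit.

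The genuinely non-routine point is the displayed limit in the last step: because left translation by $s_0$ need not be injective on a general countable semigroup, one cannot simply reindex $\sum_{h\in\Sigma_k}x_{(s_0h)s}$ as a sum over $s_0\Sigma_k$, and the multiplicities $m_k(g)$ must be estimated against the Følner error. Everything else rests only on the two elementary observations that a generalized limit is left unchanged by the averaging operators $A_k$ and that a positive normalized linear functional is dominated pointwise by the supremum.
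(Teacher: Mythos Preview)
Your proof is correct and follows the same cycle $(1)\Rightarrow(2)\Rightarrow(3)\Rightarrow(1)$ as the paper, with the same ideas in each implication: invariance of $\Theta$ under the averaging operators $A_k$ for $(1)\Rightarrow(2)$, triviality of $(2)\Rightarrow(3)$, and positivity, normalization, and a F{\o}lner estimate for $(3)\Rightarrow(1)$.

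The only noteworthy difference lies in the invariance step of $(3)\Rightarrow(1)$. The paper bounds $\bigl|\sum_{h\in\Sigma_k}(x_{hs}-x_{s_0hs})\bigr|$ directly by $\norm{x}_\infty\cdot\abs{(s_0\Sigma_k s)\,\triangle\,(\Sigma_k s)}$ and then shows $(s_0\Sigma_k s)\,\triangle\,(\Sigma_k s)\subset(s_0\Sigma_k\,\triangle\,\Sigma_k)s$. You instead introduce the multiplicities $m_k(g)=\abs{\{h\in\Sigma_k:s_0h=g\}}$ and split the difference into a multiplicity term $\sum_{g\in s_0\Sigma_k}(m_k(g)-1)x_{gs}$ and a symmetric-difference term. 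Your route is slightly more careful: if left translation by $s_0$ fails to be injective (which can happen in a non-cancellative semigroup), the paper's intermediate bound in terms of the \emph{set} symmetric difference is not literally valid term by term, whereas your multiplicity bookkeeping handles this cleanly and still yields the estimate $\leq 2\norm{x}_\infty\abs{s_0\Sigma_k\,\triangle\,\Sigma_k}/\abs{\Sigma_k}$. In the left-cancellative case (in particular for $\Sigma=\mathbb{N}_0$, the case actually used later in the paper) the two arguments coincide.
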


\begin{proof}
We show that \( (1.)\Longrightarrow(2.)\Longrightarrow(3.)\Longrightarrow(1.)\).
\newline

\noindent
\((1.)\Longrightarrow(2.)\).  Let \(x\) in \(\ell^\infty(\Sigma)\) be a point and let \(k\geq 1\) be an integer. For every \(h\) in \(\Sigma_k\) we have that \(\Theta(h.x)=\Theta(x)\); hence, it follows that
\begin{equation*}
\Theta(x)= \frac{1}{\lvert\Sigma_k\rvert} \sum\limits_{h\in \Sigma_k} \Theta(h.x) \leq \left(\sup\limits_{s\in \Sigma} \frac{1}{\lvert\Sigma_k\rvert}\sum\limits_{h\in\Sigma_k} x_{hs}\right)\Theta(\mathbf{1})
\end{equation*}
Since \(\Theta(\mathbf{1})=1\), we have shown the desired inequality.
\newline

\noindent
\((2.)\Longrightarrow(3.)\). This is trivial. 
\newline

\noindent
\((3.)\Longrightarrow(1.)\). To begin, we show that \(\Theta\) is positive. Suppose that \(x\) in \(\ell^\infty(\Sigma)\) is a point with \(x\geq 0\). We have
\begin{equation*}
\Theta(-x) \leq \limsup_{k\to +\infty}\left( \sup_{s\in \Sigma} \frac{1}{\abs{\Sigma_k}}\sum_{h\in \Sigma_k} -x_{hs}\right)\leq 0;
\end{equation*}
hence, it follows that \(\Theta(x)\geq 0\). Next, we show that \(\Theta(\mathbf{1})=1\). Since \(\Theta(\mathbf{1}) \leq 1\) and \(\Theta(-\mathbf{1})\leq -1\), we obtain that \(\Theta(\mathbf{1})=1\), as desired. To conclude, we show that \(\Theta\) is left \(\Sigma\)-invariant. Let \(x\) in \(\ell^\infty(\Sigma)\) be a point and let \(s_0\) be an element of \(\Sigma\). We define the point \(y:=x-s_0.x\). Note that \(y\) is contained in \(\ell^\infty(\Sigma)\). We claim that \(\Theta(y)\leq 0\). We have that
\begin{equation}\label{inequalityforleftinvariance}
\begin{split}
\abs{\Theta(y)}&\leq \limsup_{k\to +\infty}\left( \sup_{s\in \Sigma} \abs{\frac{1}{\abs{\Sigma_k}}\sum_{h\in \Sigma_k}\left(x_{hs}-x_{s_0 hs}\right)}\right) \\
&\leq \norm{x}_{\infty} \limsup_{k\to +\infty}\left( \sup_{s\in \Sigma} \frac{\abs{(s_0\Sigma_k s)\Delta\Sigma_ks}}{\abs{\Sigma_k}}\right).
\end{split}
\end{equation}
Let \(s\) be an element of \(\Sigma\). Observe that since \((s_0\Sigma_k \cup \Sigma_k)s=s_0\Sigma_k s \cup \Sigma_k s\) and \((s_0\Sigma_k\cap \Sigma_k)s\subset s_0\Sigma_ks\cap \Sigma_k s\), it follows that \(s_0\Sigma_ks \Delta \Sigma_k s\subset (s_0\Sigma_k \Delta \Sigma_k)s\). As \(\abs{(s_0\Sigma_k \Delta \Sigma_k)s}\leq\abs{s_0\Sigma_k\Delta\Sigma_k}\), we obtain \(\abs{(s_0\Sigma_ks )\Delta \Sigma_k s} \leq \abs{s_0\Sigma_k \Delta \Sigma_k}\). Now, inequality \eqref{inequalityforleftinvariance} implies that \(\Theta(y)=0\), since \((\Sigma_k)_{k\geq 1}\) is a Følner sequence. Thus, we have shown that \(\Theta(s_0.x)=\Theta(x)\), as desired.
\end{proof} 
We proceed with two immediate corollaries of Lemma \ref{banachmazur}.
\begin{corollary}\label{corollaryofbanachmazur} Let \(\Sigma\) denote a countable semigroup. If \(\Sigma\) admits a Følner sequence \(\mathbold\Sigma:=(\Sigma_k)_{k\geq 1}\), then for every point \(x^\star\) in \(\ell^\infty(\Sigma)\) there is a generalized limit \(\Theta_\star^{\mathbold\Sigma}\colon \ell^\infty(\Sigma)\to \R\) such that
\begin{equation*}
\Theta_\star^{\mathbold\Sigma}(x^\star)=\limsup_{k\to +\infty}\left( \sup_{s\in \Sigma} \frac{1}{\abs{\Sigma_k}}\sum_{h\in \Sigma_k} x^\star_{hs}\right).
\end{equation*}
\end{corollary}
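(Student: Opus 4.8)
The plan is to obtain $\Theta_\star^{\mathbold\Sigma}$ by a Hahn--Banach extension argument and then to recognise the extension as a generalised limit via the characterisation from Lemma \ref{banachmazur}. First I would introduce the map $p\colon \ell^\infty(\Sigma)\to\R$ given by
\[
p(x):=\limsup_{k\to+\infty}\left(\sup_{s\in\Sigma}\frac{1}{\abs{\Sigma_k}}\sum_{h\in\Sigma_k} x_{hs}\right),
\]
which is exactly the right-hand side occurring in statement (3.) of Lemma \ref{banachmazur}, evaluated at $x$. A short computation shows that $p$ is sublinear: positive homogeneity is immediate, and subadditivity follows from the elementary inequalities $\sup_{s\in\Sigma}\frac{1}{\abs{\Sigma_k}}\sum_{h\in\Sigma_k}(x_{hs}+y_{hs})\leq \sup_{s\in\Sigma}\frac{1}{\abs{\Sigma_k}}\sum_{h\in\Sigma_k}x_{hs}+\sup_{s\in\Sigma}\frac{1}{\abs{\Sigma_k}}\sum_{h\in\Sigma_k}y_{hs}$ together with $\limsup_k(a_k+b_k)\leq\limsup_k a_k+\limsup_k b_k$. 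Since $\abs{x_{hs}}\leq\norm{x}_\infty$ we also have $\abs{p(x)}\leq\norm{x}_\infty$, so $p$ is finite-valued.

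Next I would work on the one-dimensional subspace $V:=\{tx^\star : t\in\R\}\subset\ell^\infty(\Sigma)$ and define the linear functional $\Theta_0\colon V\to\R$ by $\Theta_0(tx^\star):=t\,p(x^\star)$. The point that needs checking is that $\Theta_0\leq p$ on $V$. For $t\geq 0$ this is clear since $\Theta_0(tx^\star)=p(tx^\star)$ by homogeneity. For $t<0$ we must verify $t\,p(x^\star)\leq p(tx^\star)=\abs{t}\,p(-x^\star)$, which is equivalent to $0\leq p(x^\star)+p(-x^\star)$; and this holds because $p(x^\star)+p(-x^\star)\geq p(x^\star-x^\star)=p(0)=0$ by subadditivity of $p$. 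Having established $\Theta_0\leq p$ on $V$, the Hahn--Banach theorem supplies a linear functional $\Theta_\star^{\mathbold\Sigma}\colon\ell^\infty(\Sigma)\to\R$ extending $\Theta_0$ and satisfying $\Theta_\star^{\mathbold\Sigma}(x)\leq p(x)$ for every $x$ in $\ell^\infty(\Sigma)$.

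Finally, since $\Theta_\star^{\mathbold\Sigma}$ satisfies precisely the inequality in statement (3.) of Lemma \ref{banachmazur}, the implication $(3.)\Longrightarrow(1.)$ of that lemma shows that $\Theta_\star^{\mathbold\Sigma}$ is positive and a generalised limit. Moreover $\Theta_\star^{\mathbold\Sigma}(x^\star)=\Theta_0(x^\star)=p(x^\star)$, which is the asserted identity, so the proof is complete. The argument is essentially routine once Lemma \ref{banachmazur} is in hand; the only mild subtlety is the verification of $\Theta_0\leq p$ on $V$, namely the inequality $p(x^\star)+p(-x^\star)\geq 0$, which is exactly where subadditivity of $p$ is used.
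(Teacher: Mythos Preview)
Your proof is correct and follows exactly the route the paper takes: the paper's own proof simply reads ``This is a direct consequence of Lemma \ref{banachmazur} and the Hahn-Banach Theorem,'' and you have filled in precisely those details---defining the sublinear functional $p$, extending from the line through $x^\star$ via Hahn--Banach, and invoking the implication $(3.)\Rightarrow(1.)$ of Lemma \ref{banachmazur}.
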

\begin{proof}
This is a direct consequence of Lemma \ref{banachmazur} and the Hahn-Banach Theorem.
\end{proof}
\begin{corollary}\label{corollarytwo} Let \(\Sigma\) denote a countable semigroup. If \(\mathbold\Sigma:=(\Sigma_k)_{k\geq 1}\) is a Følner sequence, then we have that the map \(\mathcal{L}_{\mathbold\Sigma}\colon \ell^\infty(\Sigma)\to \mathbb{R}\) given by the assignment
\begin{equation*}
x\mapsto \lim\limits_{k\to+\infty}\,\left(\sup\limits_{s\in \Sigma} \frac{1}{\lvert\Sigma_k \rvert} \sum\limits_{h\in \Sigma_k} x_{hs}\right)
\end{equation*}
is well-defined. Moreover, the equality \(\mathcal{L}_{\mathbold\Sigma}=\mathcal{L}_{\mathbold\Sigma^\prime}\) holds for all Følner sequences \(\mathbold\Sigma:=(\Sigma_k)_{k\geq 1}\) and \(\mathbold\Sigma^\prime:=(\Sigma_k^\prime)_{k\geq 1}\). 
\end{corollary}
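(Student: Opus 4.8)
The plan is to derive both assertions directly from Lemma \ref{banachmazur} and Corollary \ref{corollaryofbanachmazur}. Fix a point $x$ in $\ell^\infty(\Sigma)$ and, for a Følner sequence $\mathbold\Sigma = (\Sigma_k)_{k\geq 1}$ of $\Sigma$, abbreviate $a_k^{\mathbold\Sigma} := \sup_{s\in\Sigma}\frac{1}{\abs{\Sigma_k}}\sum_{h\in\Sigma_k}x_{hs}$. To see that $\mathcal{L}_{\mathbold\Sigma}$ is well-defined I must check that $\liminf_k a_k^{\mathbold\Sigma} = \limsup_k a_k^{\mathbold\Sigma}$. By Corollary \ref{corollaryofbanachmazur} there is a generalized limit $\Theta\colon\ell^\infty(\Sigma)\to\R$ with $\Theta(x) = \limsup_k a_k^{\mathbold\Sigma}$. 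A generalized limit is in particular a positive linear functional, so statement (1.) of Lemma \ref{banachmazur} holds for $\Theta$ relative to $\mathbold\Sigma$, and the implication $(1.)\Rightarrow(2.)$ of that lemma gives $\Theta(x)\leq\liminf_k a_k^{\mathbold\Sigma}$. Consequently $\limsup_k a_k^{\mathbold\Sigma} = \Theta(x) \leq \liminf_k a_k^{\mathbold\Sigma} \leq \limsup_k a_k^{\mathbold\Sigma}$, so all three agree and the limit defining $\mathcal{L}_{\mathbold\Sigma}(x)$ exists.

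For independence of the Følner sequence, let $\mathbold\Sigma = (\Sigma_k)_{k\geq 1}$ and $\mathbold\Sigma^\prime = (\Sigma_k^\prime)_{k\geq 1}$ be Følner sequences of $\Sigma$ and keep $x$ fixed. Applying Corollary \ref{corollaryofbanachmazur} to $\mathbold\Sigma$ produces a generalized limit $\Theta$ with $\Theta(x) = \limsup_k a_k^{\mathbold\Sigma} = \mathcal{L}_{\mathbold\Sigma}(x)$, the last equality by the previous paragraph. Positivity and the generalized-limit property of $\Theta$ do not refer to any particular Følner sequence, so statement (1.) of Lemma \ref{banachmazur} holds for $\Theta$ relative to $\mathbold\Sigma^\prime$ as well; the implication $(1.)\Rightarrow(3.)$ then yields $\mathcal{L}_{\mathbold\Sigma}(x) = \Theta(x) \leq \limsup_k a_k^{\mathbold\Sigma^\prime} = \mathcal{L}_{\mathbold\Sigma^\prime}(x)$. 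Interchanging the roles of $\mathbold\Sigma$ and $\mathbold\Sigma^\prime$ gives the reverse inequality, hence $\mathcal{L}_{\mathbold\Sigma}(x) = \mathcal{L}_{\mathbold\Sigma^\prime}(x)$ for every $x$; that is, $\mathcal{L}_{\mathbold\Sigma} = \mathcal{L}_{\mathbold\Sigma^\prime}$.

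I do not expect a real obstacle here, since the entire content is carried by the equivalence of the three conditions in Lemma \ref{banachmazur}. The only point that wants a little care is bookkeeping: the generalized limit supplied by Corollary \ref{corollaryofbanachmazur} is constructed from $\mathbold\Sigma$, but in the second step it must be fed back into Lemma \ref{banachmazur} relative to $\mathbold\Sigma^\prime$, which is justified exactly because being a positive generalized limit is an intrinsic property of the functional and not tied to a choice of Følner sequence.
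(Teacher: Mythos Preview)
Your argument is correct and is precisely the unpacking of the paper's one-line proof, which also says the corollary follows directly from Lemma~\ref{banachmazur} and Corollary~\ref{corollaryofbanachmazur}. The key observation you make explicit---that condition~(1.) in Lemma~\ref{banachmazur} is intrinsic to $\Theta$ and therefore the implication $(1.)\Rightarrow(2.)$ or $(1.)\Rightarrow(3.)$ may be invoked with any F{\o}lner sequence---is exactly what the paper leaves implicit.
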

\begin{proof}
This is a direct consequence of Lemma \ref{banachmazur} and Corollary \ref{corollaryofbanachmazur}.
\end{proof}
\subsection{A generalization of a Theorem of J. Oxtoby and S. Ulam}\label{mainresultofthissection}
Let \((X,d)\) be a complete separable metric space and let \(T\colon X\to X\) be a homeomorphism of \(X\). In \cite{ulamOxtoby1939}, J. Oxtoby and S. Ulam showed that if there is a point \(x_0\) in \(X\) and a compact subset \(K_0\subset X\) such that
\begin{equation*}
\limsup_{k\to +\infty}\left(\frac{1}{k} \sum_{i=1}^k \mathbbm{1}_{K_0}(T^i(x_0))\right)> 0,
\end{equation*}
then there exists a \(T\)-invariant Radon probability measure \(\mu\colon \mathcal{B}_{X}\to [0,1]\) such that \(\mu(K_0)>0\). In Oxtoby and Ulam's proof, the measure \(\mu\) is obtained by the use of Carathéodory's extension theorem from a \(T\)-invariant metric outer measure, which is constructed by the means of generalized limits. In \cite{adamski1989}, W. Adamski used the well-known construction of Radon measures via inner approximation due to Kisy\'nski and Topsøe to generalise the result of Oxtoby and Ulam to Hausdorff topological spaces. In the following we use Adamski's approach to prove a further generalisation of Oxtoby and Ulam's result. 
\begin{theorem}\label{mainTheorem}
Let \((\mathfrak{X}, \mathcal{T}_{\hspace{-0.05em}\mathfrak{X}})\) be a Hausdorff topological space and let \(\Sigma\) be a countable subsemigroup of the semigroup of continuous self-maps of \((\mathfrak{X}, \mathcal{T}_{\hspace{-0.05em}\mathfrak{X}})\). Suppose that \(\Sigma\) admits a Følner sequence \((\Sigma_k)_{k\geq 1}\). If there is a point \(x_0\) in \(E\) and a compact subset \(K_0\subset \mathfrak{X}\) such that
\begin{equation*}
\limsup_{k\to +\infty}\left(\sup_{s\in \Sigma} \frac{1}{\abs{\Sigma_k}} \sum_{h\in \Sigma_k} \mathbbm{1}_{K_0}(h\circ s(x_0))\right)> 0,
\end{equation*}
then there exists a \(\Sigma\)-invariant Radon probability measure \(\mu\colon \mathcal{B}_{\mathfrak{X}}\to [0,1]\) such that \(\mu(K_0)>0\). 
\end{theorem}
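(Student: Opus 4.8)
The plan is to carry out the Oxtoby--Ulam construction in the form used by Adamski \cite{adamski1989} for Hausdorff spaces, with the Ces\`aro averages along the orbit of a single transformation replaced by the F\o lner averages over $\Sigma$; the combinatorial substitute for the Banach--Mazur limit is Corollary \ref{corollaryofbanachmazur}. Concretely, fix a F\o lner sequence $(\Sigma_k)_{k\geq 1}$ of $\Sigma$ and consider the bounded function $x^\star\in\ell^\infty(\Sigma)$ with $x^\star_g:=\mathbbm{1}_{K_0}(g(x_0))$. Since $x^\star_{hs}=\mathbbm{1}_{K_0}(h\circ s(x_0))$, the hypothesis of the theorem is exactly the assertion that $\limsup_{k\to+\infty}\big(\sup_{s\in\Sigma}\frac{1}{\abs{\Sigma_k}}\sum_{h\in\Sigma_k}x^\star_{hs}\big)>0$, so Corollary \ref{corollaryofbanachmazur} produces a generalized limit $\Theta\colon\ell^\infty(\Sigma)\to\R$ with $\Theta(x^\star)=:c>0$.

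For an open set $U\subset\mathfrak{X}$ put $\nu(U):=\Theta\big((\mathbbm{1}_U(g(x_0)))_{g\in\Sigma}\big)$; this is well-defined since the sequence in the argument is bounded. From positivity and linearity of $\Theta$ and from $\Theta(\mathbf{1})=1$ one reads off that $\nu(\emptyset)=0$, $\nu(\mathfrak{X})=1$, that $\nu$ is monotone, that $\nu(U_1\cup U_2)\leq\nu(U_1)+\nu(U_2)$, and that $\nu(U_1\cup U_2)=\nu(U_1)+\nu(U_2)$ when $U_1\cap U_2=\emptyset$. For $s_0\in\Sigma$ the set $s_0^{-1}(U)$ is open by continuity and, since $g(x_0)\in s_0^{-1}(U)$ iff $(s_0 g)(x_0)\in U$, the sequence $(\mathbbm{1}_{s_0^{-1}(U)}(g(x_0)))_{g}$ is the left translate $s_0.(\mathbbm{1}_U(g(x_0)))_{g}$; hence left-invariance of $\Theta$ gives $\nu(s_0^{-1}(U))=\nu(U)$ for every $s_0\in\Sigma$, while $\nu(U)\geq\Theta(x^\star)=c$ whenever $K_0\subset U$. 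Now define $\beta\colon\mathcal{K}_{\mathfrak{X}}\to[0,\infty)$ by $\beta(K):=\inf\{\nu(U):U\text{ open},\,K\subset U\}$. One checks routinely that $\beta$ is monotone and subadditive and that it is tight in the sense required for the Kisy\'nski--Tops\o e representation theorem (given $K$ and $\epsilon>0$, pick open $U\supset K$ with $\nu(U)<\beta(K)+\epsilon$; then $\beta(L)\leq\nu(U)<\beta(K)+\epsilon$ for all compact $L\subset U$); the Hausdorff hypothesis is used exactly once, to deduce additivity of $\beta$ on disjoint compacta from disjoint open neighbourhoods together with additivity of $\nu$ on disjoint opens. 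Following \cite{adamski1989}, $\beta$ is therefore the trace on $\mathcal{K}_{\mathfrak{X}}$ of a Radon measure $\mu\colon\mathcal{B}_{\mathfrak{X}}\to[0,\infty]$, unique by inner regularity, with $\mu(K)=\beta(K)$ for all compact $K$; in particular $\mu(\mathfrak{X})=\sup\{\beta(K):K\in\mathcal{K}_{\mathfrak{X}}\}\leq\nu(\mathfrak{X})=1$ and $\mu(K_0)=\beta(K_0)\geq c>0$.

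It remains to show that $\mu$ is $\Sigma$-invariant; then $\mu/\mu(\mathfrak{X})$ is the desired Radon probability measure, since $0<\mu(K_0)\leq\mu(\mathfrak{X})\leq 1$. Fix $s_0\in\Sigma$. The pushforward $(s_0)_\ast\mu$ is again a Radon measure, because for compact $C$ the image $s_0(C)$ is compact and $C\subset s_0^{-1}(s_0(C))$, which yields the inner regularity of $(s_0)_\ast\mu$. The invariance of $\nu$ gives $\beta(L)\leq\beta(s_0(L))$ for every compact $L$: if $W$ is open and $s_0(L)\subset W$, then $L\subset s_0^{-1}(W)$ and $\nu(s_0^{-1}(W))=\nu(W)$, so $\beta(L)\leq\inf_{W}\nu(W)=\beta(s_0(L))$. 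Since $s_0(L)$ is a compact subset of $U$ whenever $L$ is a compact subset of $s_0^{-1}(U)$, inner regularity of $\mu$ applied to the open sets $s_0^{-1}(U)$ and $U$ yields
\[
(s_0)_\ast\mu(U)=\mu\big(s_0^{-1}(U)\big)=\sup\{\beta(L):L\subset s_0^{-1}(U),\,L\in\mathcal{K}_{\mathfrak{X}}\}\leq\sup\{\beta(K):K\subset U,\,K\in\mathcal{K}_{\mathfrak{X}}\}=\mu(U)
\]
for every open $U$. Passing to complements in $\mathfrak{X}$, which is legitimate because $\mu(\mathfrak{X})<\infty$ and $(s_0)_\ast\mu(\mathfrak{X})=\mu(\mathfrak{X})$, gives $(s_0)_\ast\mu(F)\geq\mu(F)$ for every closed $F$, hence $(s_0)_\ast\mu\geq\mu$ on all Borel sets by inner regularity; equality of the finite total masses then forces $(s_0)_\ast\mu=\mu$. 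As $s_0\in\Sigma$ was arbitrary, $\mu$ is $\Sigma$-invariant.

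The semigroup and F\o lner ingredients are confined to the first paragraph and are handled completely by Section \ref{maxval}; the substance of the proof is the passage from the content $\beta$ to the Radon measure $\mu$. Because $\mathfrak{X}$ is only assumed Hausdorff, the metric outer measure and Carath\'eodory extension of the classical Oxtoby--Ulam argument are unavailable, and the work is to verify that $\beta$ satisfies precisely the hypotheses of the Kisy\'nski--Tops\o e inner-approximation theorem, along the lines of Adamski's treatment. A related subtlety is that the elements of $\Sigma$ are merely continuous, not homeomorphisms, so $s_0^{-1}$ need not preserve compactness; this is why the invariance is obtained above via the pushforward inequality for $\nu$ and $\beta$ rather than from a manifest symmetry of the construction.
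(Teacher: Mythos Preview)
Your proof is correct and follows essentially the same route as the paper's: use Corollary~\ref{corollaryofbanachmazur} to obtain a generalized limit $\Theta$ realising the positive $\limsup$, define the content $U\mapsto\Theta((\mathbbm{1}_U(g(x_0)))_g)$ on open sets, extend it to a Radon measure via the Kisy\'nski--Tops\o e inner-approximation theorem, and then deduce $\Sigma$-invariance from the left-invariance of $\Theta$ together with an inequality argument and the equality of total masses. The only differences are organisational: the paper works with the content directly on $\mathcal{T}_{\mathfrak{X}}$ and cites Tops\o e's theorem in the form giving $\mu(B)=\sup_{K\subset B}\inf_{K\subset U}\beta(U)$, whereas you pass first to the induced content on $\mathcal{K}_{\mathfrak{X}}$; and for invariance the paper shows $s_\ast\mu(K)\le\mu(K)$ on compacta in one line (via $\mu(s^{-1}(K))\le\inf_{K\subset U}\beta(s^{-1}(U))=\inf_{K\subset U}\beta(U)=\mu(K)$), while you derive the reverse inequality by going through open sets and complements. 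Your write-up is in fact more careful in one respect: you explicitly verify that $(s_0)_\ast\mu$ is again Radon, a point the paper uses but does not spell out.
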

\begin{proof}
We define the sequence \(x_0:=\left(\mathbbm{1}_{K_0}(s(x_0))\right)_{s\in\Sigma}\). By the virtue of Corollary \ref{corollaryofbanachmazur} there exists a generalized limit \(\Theta\colon \ell^\infty(\Sigma)\to \R\) such that 
\begin{equation*}
\Theta(x_0)=\limsup_{k\to +\infty}\left(\sup_{s\in \Sigma} \frac{1}{\abs{\Sigma_k}} \sum_{h\in \Sigma_k} \mathbbm{1}_{K_0}(h\circ s(x_0))\right). 
\end{equation*}
The set function \(\beta\colon \mathcal{T}_{\hspace{-0.05em}\mathfrak{X}} \to [0,1]\) given by the assignment
\begin{equation*}
U\mapsto \Theta\left((\mathbbm{1}_{U}(s(x_0)))_{s\in \Sigma}\right)
\end{equation*}
satisfies \(\beta(\varnothing)=0\) and \(\beta(U\cap V)+\beta(U\cap V)=\beta(U)+\beta(V)\) for all \(U,V\) in \(\mathcal{T}_{\hspace{-0.05em}\mathfrak{X}}\). Moreover, we have for all \(U,V\) in \(\mathcal{T}_{\hspace{-0.05em}\mathfrak{X}}\) that \(\beta(U)\leq \beta(V)\), whenever \(U\subset V\). Thus, Theorem 2 in \cite{topsoe1970compactness} asserts that the map \(\mu\colon \mathcal{B}_{\hspace{-0.05em}\mathfrak{X}} \to [0,1] \) given by the assignment
\begin{equation*}
B\mapsto \sup\limits_{\substack{K\subset B,\\ K\in \mathcal{K}_{\hspace{-0.05em}\mathfrak{X}}}} \,\;\!\inf\hspace{-1.7em}\mathop{\vphantom{\sup}}\limits_{\substack{K\subset U, \\U\in\mathcal{T}_{\hspace{-0.05em}\mathfrak{X}}}} \beta(U)
\end{equation*} 
is a Radon measure. Note that \(\mu(U)\leq \beta(U)\) for all \(U\) in \(\mathcal{T}_{\hspace{-0.05em}\mathfrak{X}}\). Let \(s\) be an element of \(\Sigma\). We claim that \(s_\ast\mu=\mu\). Let \(K\) be a compact subset of \((\mathfrak{X}, \mathcal{T}_{\hspace{-0.05em}\mathfrak{X}})\). We compute
\begin{equation*}
\mu(s^{-1}(K))\leq \inf\hspace{-1.7em}\mathop{\vphantom{\sup}}\limits_{\substack{K\subset U, \\U\in\mathcal{T}_{\hspace{-0.05em}\mathfrak{X}}}} \mu(s^{-1}(U))\leq \inf\hspace{-1.7em}\mathop{\vphantom{\sup}}\limits_{\substack{K\subset U, \\U\in\mathcal{T}_{\hspace{-0.05em}\mathfrak{X}}}} \beta(s^{-1}(U))=\inf\hspace{-1.7em}\mathop{\vphantom{\sup}}\limits_{\substack{K\subset U, \\U\in\mathcal{T}_{\hspace{-0.05em}\mathfrak{X}}}} \beta(U)=\mu(K).
\end{equation*}
As a result, we obtain that \(s_\ast\mu\leq \mu\), as \(\mu\) is a Radon measure. We have
\begin{equation*}
\mu(X)=s_\ast\mu(B)+s_\ast\mu(B^c) \leq \mu(B)+\mu(B^c) =\mu(X)
\end{equation*}
for all \(B\) in \(\mathcal{B}_{\hspace{-0.05em}\mathfrak{X}}\). Hence, it follows that \(s_\ast\mu=\mu\), as claimed. By construction, we have \(\mu(K_0)>0\). Thus, by rescaling \(\mu\) if necessary we obtain a \(\Sigma\)-invariant Radon probability measure on \((\mathfrak{X}, \mathcal{T}_{\hspace{-0.05em}\mathfrak{X}})\) such that \(\mu(K_0)>0\), as desired.  
\end{proof}

\section{Proofs of the main results}\label{five}
We begin with the proof of Theorem \ref{compactfixedpoint}.
\begin{profff}
Throughout the following proof we employ the notation from Section \ref{six}. Fix a measure \(\mu_0\) in \(P(K)\). Let \(M^0(K)\) denote the vector space of all signed finite Radon measures \(\mu\colon \mathcal{B}_K\to \R\) such that \(\mu(K)=0\). The map \(\norm{\cdot}_0\colon M^0(K)\to \R\) given by the assignment
\begin{equation*}
\mu\mapsto \sup\left\{ \int_K f\,d\mu \, : \, f\colon X\to \R \textrm{ is 1-Lipschitz }\right\}
\end{equation*}
defines a norm on \(M^0(K)\), cf. \cite[Theorem 4.4]{edwards}. Due to Theorem 4.1 in \cite{edwards} we have that \(W^1(\mu, \nu)=\norm{\mu-\nu}_0\) for all measures \(\mu\) and \(\nu\) in \(P(K)\); hence, the map \(\Phi\colon P(K)\to M^0(K)\) given by \(\mu\mapsto \mu-\mu_0\) is an isometric embedding. It is well-known that the metric space \((P(K), W^1)\) is compact, cf. \cite[Remark 6.19]{villani}. As a result, the set \(\Phi(P(K))\) is a non-empty compact convex subset of \(M^0(K)\). Note that the restriction map \(s|_K\colon K\to K\) is an isometry of \(K\). For each \(s\) in \(\Sigma\) we define the map \(\overline{s}\colon \Phi(P(K))\to \Phi(P(K))\) through the assignment \(\mu-\mu_0\mapsto (s|_K)_\ast\mu-\mu_0\). Observe that \(\overline{s}\) is an affine isometry of \(\Phi(K)\). Ryll-Nardzewski's fixed-point theorem, cf. \cite{ryll1967fixed}, asserts that there is a point \(\mu_\star-\mu_0\) in \(\Phi(P(K))\) such that \(\overline{s}(\mu_\star-\mu_0)=\mu_\star-\mu_0\) for all \(s\) in \(\Sigma\). Hence, the probability measure \(\mu_\star\colon \mathcal{B}_K\to [0,1]\) is \(s|_K\)-invariant for all \(s\) in \(\Sigma\). Let \(i\colon K\hookrightarrow X\) denote the inclusion map. It is readily verified that the probability measure \(i_\ast\mu_\star\colon \mathcal{B}_X\to [0,1]\) is contained in \(P_1(X)\). Furthermore, the measure \(i_\ast\mu_\star\) is \(\Sigma\)-invariant. Let \(\beta_\sigma\colon P_1(X)\to X\) denote the contracting barycenter map associated to \(\sigma\). We define the point \(x_\star:=\beta_\sigma(i_\ast\mu_\star)\). Clearly, as \(\textrm{spt}(i_\ast\mu_\star)\) is a subset of \(K\), Theorem \ref{navas1} tells us that the point \(x_\star\) is contained in \(\overline{\textrm{conv}_{\sigma}}(K)\). Furthermore, we compute \(s(x_\star)=\beta_\sigma(s_\ast i_\ast\mu_\star)=\beta_\sigma(i_\ast\mu_\star)=x_\star\) for all \(s\) in \(\Sigma\), since \(\Sigma\) is \(\sigma\)-equivariant and \(i_\ast\mu_\star\) is \(\Sigma\)-invariant. The Theorem follows. 
\end{profff}

In order to derive Theorem \ref{maintheorem} we establish two results, Theorem \ref{maintheorem23} and Lemma \ref{bounded}, whose combination will directly imply Theorem \ref{maintheorem}. 
\begin{theorem}\label{maintheorem23} Let \((X,d)\) denote a complete metric space and let \(\sigma\colon X\times X\times [0,1]\to X\) be a conical geodesic bicombing that has the midpoint property. Suppose that \(\Sigma\) is a countable \(\sigma\)-equivariant subsemigroup of the semigroup of 1-Lipschitz self-maps of \((X,d)\). Suppose that \(\Sigma\) admits a Følner sequence \((\Sigma_k)_{k\geq 1}\). If there is a point \(x_0\) in \(X\) and a compact subset \(K_0\subset X\) such that the set \(A:=\big\{ s(x_0) : s\in \Sigma \big\}\) is bounded and the inequality
\begin{equation}\label{strictIneq2}
\limsup_{k\to +\infty}\left(\sup_{s\in \Sigma} \frac{1}{\abs{\Sigma_k}} \sum_{h\in \Sigma_k} \mathbbm{1}_{K_0}(h\circ s(x_0))\right)> 0
\end{equation}
holds, then there is a point \(x_\star\) in \(\overline{\textrm{conv}_{\sigma}}(A)\) such that \(s(x_\star)=x_\star\) for all \(s\) in \(\Sigma\). 
\end{theorem}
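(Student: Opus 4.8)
The plan is to follow the proof of Theorem \ref{compactfixedpoint} nearly verbatim, the only change being that the appeal to Ryll-Nardzewski's fixed point theorem is replaced by an application of Theorem \ref{mainTheorem}. Concretely, I would first produce a \(\Sigma\)-invariant Radon probability measure \(\mu\) whose support lies in the closure \(\overline{A}\) of the orbit \(A=\{s(x_0):s\in\Sigma\}\), and then define \(x_\star\) to be the barycenter \(\beta_\sigma(\mu)\), where \(\beta_\sigma\colon P_1(X)\to X\) is the contracting barycenter map associated to \(\sigma\) furnished by Theorem \ref{navas1}. To get started one checks that Theorem \ref{mainTheorem} applies with \(\mathfrak{X}:=X\) in its metric topology: a metric space is Hausdorff, \(1\)-Lipschitz maps are continuous, \(\Sigma\) is by hypothesis a countable subsemigroup of \(1\)-Lipschitz self-maps admitting a Følner sequence \((\Sigma_k)_{k\geq 1}\), and the assumption \eqref{strictIneq2} is word-for-word the density hypothesis of Theorem \ref{mainTheorem} for the point \(x_0\) and the compact set \(K_0\). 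Thus there is a \(\Sigma\)-invariant Radon probability measure \(\mu\colon\mathcal{B}_X\to[0,1]\) with \(\mu(K_0)>0\); in fact the clause \(\mu(K_0)>0\) will play no further role, only the existence of \(\mu\) matters.

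Next I would pin down the support of \(\mu\). Going through the construction of \(\mu\) in the proof of Theorem \ref{mainTheorem}, one has \(\mu(U)\leq\Theta\big((\mathbbm{1}_U(s(x_0)))_{s\in\Sigma}\big)\) for every open set \(U\subset X\), where \(\Theta\) is the generalized limit used there. If \(U\) is disjoint from \(\overline{A}\), then \(s(x_0)\in A\subset\overline{A}\) gives \(\mathbbm{1}_U(s(x_0))=0\) for every \(s\in\Sigma\), hence \(\mu(U)\leq\Theta(0)=0\). Taking \(U:=X\setminus\overline{A}\) yields \(\mu(\overline{A})=1\), so \(\textrm{spt}(\mu)\subset\overline{A}\). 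This is the step where the boundedness of \(A\) is used: since \(\overline{A}\) is bounded and carries full \(\mu\)-mass, the continuous function \(x\mapsto d(x,x_0)\) is \(\mu\)-essentially bounded, so \(\mu\) has a finite first moment and thus \(\mu\in P_1(X)\), which is what makes \(\beta_\sigma(\mu)\) meaningful.

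Finally, set \(x_\star:=\beta_\sigma(\mu)\). By the Locality property of \(\beta_\sigma\) in Theorem \ref{navas1}, \(x_\star\in\overline{\textrm{conv}_{\sigma}}(\textrm{spt}(\mu))\); since \(\sigma\) is jointly continuous, the set \(\overline{\textrm{conv}_{\sigma}}(A)\) is closed, \(\sigma\)-convex, and contains \(A\), so it also contains \(\overline{A}\supseteq\textrm{spt}(\mu)\) and hence \(\overline{\textrm{conv}_{\sigma}}(\textrm{spt}(\mu))\), giving \(x_\star\in\overline{\textrm{conv}_{\sigma}}(A)\). For each \(s\in\Sigma\) the self-map \(s\) is \(1\)-Lipschitz and \(\sigma\)-equivariant, so the Equivariance property of \(\beta_\sigma\) in Theorem \ref{navas1} gives \(s(x_\star)=s(\beta_\sigma(\mu))=\beta_\sigma(s_\ast\mu)=\beta_\sigma(\mu)=x_\star\), the third equality because \(\mu\) is \(\Sigma\)-invariant. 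This is exactly the desired conclusion. The main obstacle is minimal once Theorems \ref{mainTheorem} and \ref{navas1} are in place: the only point requiring real care is the localization \(\textrm{spt}(\mu)\subset\overline{A}\), which simultaneously places \(x_\star\) in \(\overline{\textrm{conv}_{\sigma}}(A)\) and guarantees that \(\mu\) has a finite first moment so that \(\beta_\sigma(\mu)\) is defined at all.
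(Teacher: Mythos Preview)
Your proof is correct and follows the same two-step strategy as the paper: obtain a \(\Sigma\)-invariant Radon probability measure via Theorem \ref{mainTheorem}, then apply the barycenter \(\beta_\sigma\) from Theorem \ref{navas1}. The only divergence is in how the measure is localized to \(\overline{A}\): you open up the construction inside the proof of Theorem \ref{mainTheorem} to see directly that \(\mu(X\setminus\overline{A})=0\), whereas the paper treats Theorem \ref{mainTheorem} as a black box (applied with the compact set \(\overline{A}\cap K_0\), noting that \(\mathbbm{1}_{K_0}(h\circ s(x_0))=\mathbbm{1}_{\overline{A}\cap K_0}(h\circ s(x_0))\) since the orbit sits in \(\overline{A}\)) and then passes to the conditional measure \(\mu_\star(B):=\mu(\overline{A}\cap B)/\mu(\overline{A})\), checking separately that \(\Sigma\)-invariance survives the conditioning via \(\overline{A}\subset s^{-1}(\overline{A})\).
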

\begin{proof}
The intersection \(\overline{A}\cap K_0\) is a compact subset of \(X\). Theorem \ref{mainTheorem} tells us that there is a \(\Sigma\)-invariant Radon probability measure \(\mu\colon \mathcal{B}_X\to [0,1]\) such that \(\mu(\overline{A}\cap K_0)>0\). It is readily verified that the Borel measure
\begin{equation*}
\begin{split}
&\mu_\star\colon \mathcal{B}_X\to [0,1] \\
&B\mapsto\frac{1}{\mu(\overline{A})}\mu(\overline{A}\cap B)
\end{split}
\end{equation*}
is a Radon probability measure. Note that \(\overline{A}\subset s^{-1}(\overline{A})\) for all \(s\in \Sigma\). Since \(\mu\) is \(\Sigma\)-invariant, it follows that \(\mu(s^{-1}(\overline{A})\cap\overline{A}^c)=0\) for all \(s\) in \(\Sigma\). Now, it is readily verified that \(\mu_\star\) is \(\Sigma\)-invariant. By construction, the support \(\textrm{spt}(\mu_\star)\) is a subset of \(\overline{A}\). Since the subset \(\overline{A}\) is bounded, we obtain that the measure \(\mu_\star\) has a finite first moment and is thus contained in \(P_1(X)\). Let \(\beta_\sigma\colon P_1(X)\to X\) denote the contracting barycenter map associated to \(\sigma\).  We define the point \(x_\star:= \beta_\sigma(\mu_\star)\). Clearly, as \(\textrm{spt}(\mu_\star)\) is a subset of \(\overline{A}\), Theorem \ref{navas1} tells us that the point \(x_\star\) is contained in \(\overline{\textrm{conv}_{\sigma}}(\overline{A})=\overline{\textrm{conv}_{\sigma}}(A)\). Furthermore, we compute \(s(x_\star)=\beta_\sigma(s_\ast\mu_\star)=\beta_\sigma(\mu_\star)=x_\star\) for all \(s\) in \(\Sigma\), since \(\Sigma\) is \(\sigma\)-equivariant and \(\mu_\star\) is \(\Sigma\)-invariant. The Theorem follows.
\end{proof}

Note that Corollary \ref{corollarytwo} asserts that the limit \eqref{strictIneq2} does not depend on the Følner sequence \((\Sigma_k)_{k\geq 1}\). 

\begin{lemma}\label{bounded}
Let \((X,d)\) be a metric space and let \(\phi\colon X\to X\) be an isometry of \(X\). If there is a point \(x_0\) in \(X\) and a bounded subset \(B\subset X\) such that
\begin{equation*}
\limsup_{k\to +\infty}\left(\sup_{l\geq 0} \frac{1}{k} \sum_{i=0}^{k-1} \mathbbm{1}_{B}(\phi^{i+l}(x_0))\right)> 0,
\end{equation*}
then \(\phi\) has bounded orbits. 
\end{lemma}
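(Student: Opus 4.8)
\emph{Proof strategy.} Put $A := \{\, n \ge 0 : \phi^{n}(x_{0}) \in B \,\}$, a set of non-negative integers. Since $\sum_{i=0}^{k-1}\mathbbm{1}_{B}(\phi^{i+l}(x_{0}))$ counts the elements of $A$ in the window $\{l, l+1, \dots, l+k-1\}$, the hypothesis says exactly that $A$ has positive \emph{upper Banach density}, cf. \cite[Definition 3.7]{furstenberg2014recurrence}; in particular $A \neq \varnothing$. I would first record two elementary facts about the function $f\colon \{0,1,2,\dots\} \to [0,+\infty)$, $f(n) := d(x_{0}, \phi^{n}(x_{0}))$. Firstly, $f$ is subadditive: for $m,n \ge 0$ the triangle inequality together with the fact that $\phi^{m}$ is an isometry gives
\[
f(m+n) \le d(x_{0}, \phi^{m}(x_{0})) + d\big(\phi^{m}(x_{0}), \phi^{m}(\phi^{n}(x_{0}))\big) = f(m) + f(n) .
\]
Secondly, $f(t) \le \operatorname{diam}(B) =: R < +\infty$ for every $t$ in the difference set $S := \{\, b-a : a, b \in A,\ a \le b \,\}$, since for such a $t = b-a$, applying the isometry $\phi^{a}$ to the pair $x_{0}, \phi^{t}(x_{0})$ yields $f(t) = d(\phi^{a}(x_{0}), \phi^{b}(x_{0}))$ and both of these points lie in $B$; note also that $0 \in S$.

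The one non-elementary ingredient is then the classical theorem of combinatorial number theory asserting that the difference set of a set of non-negative integers of positive upper Banach density is \emph{syndetic}, that is, has uniformly bounded gaps; see, e.g., \cite{furstenberg2014recurrence}. Applied to $A$, it provides an integer $L \ge 1$ such that $S \cap \{\, m, m+1, \dots, m+L \,\} \ne \varnothing$ for every $m \ge 0$. Granting this, I would conclude as follows. Fix $n \ge 0$ and set $s := \max\{\, r \in S : r \le n \,\}$, which exists because $0 \in S$. By syndeticity there is $s' \in S$ with $s+1 \le s' \le s+L+1$, and by maximality of $s$ we have $s' > n$; hence $0 \le n-s \le L$. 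Subadditivity and the second fact above now give
\[
d(x_{0}, \phi^{n}(x_{0})) = f(n) \le f(s) + f(n-s) \le R + \max_{0 \le j \le L} f(j) =: C < +\infty .
\]
Therefore the orbit $\{\, \phi^{n}(x_{0}) : n \ge 0 \,\}$ has diameter at most $2C$. For an arbitrary $y \in X$ and $n \ge 0$, the triangle inequality gives $d(y, \phi^{n}(y)) \le 2 d(x_{0}, y) + C$, and applying the isometry $\phi^{n}$ to the pair $y, \phi^{-n}(y)$ gives $d(y, \phi^{-n}(y)) = d(\phi^{n}(y), y) \le 2 d(x_{0}, y) + C$ as well; hence every orbit of $\phi$ is bounded.

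The main obstacle is precisely the combinatorial input: once $S$ is known to have bounded gaps, the remainder is a routine combination of subadditivity and the triangle inequality. A self-contained argument would need to reprove the syndeticity of $S$; the standard approach observes that if too many left-translates of $A$ were pairwise disjoint then the density of $A$ would be forced to $0$, but upgrading this to uniformly bounded gaps (rather than merely ``$S$ meets some window near the origin'') takes a little care, so I would prefer to invoke the classical result.
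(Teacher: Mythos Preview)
Your proof is correct and follows essentially the same route as the paper: both invoke the Furstenberg-type result that the difference set of a set of positive upper Banach density is syndetic (the paper cites \cite[Theorem 3.19(a)]{furstenberg2014recurrence}), then combine this with the observation that $d(x_0,\phi^{t}(x_0))\le\operatorname{diam}(B)$ whenever $t$ lies in that difference set, together with a triangle-inequality/subadditivity estimate to control the remaining bounded-length discrepancy. The only cosmetic differences are that you package the triangle inequality as subadditivity of $f(n)=d(x_0,\phi^n(x_0))$ and pick an element of $S$ \emph{below} $n$, whereas the paper picks one just \emph{above}; you also spell out the passage from the single orbit of $x_0$ to arbitrary (two-sided) orbits, which the paper leaves implicit.
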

\begin{proof}
We define the set \(A:=\big\{\phi^k(x_0) : k\geq 0\big\}\). Note that it suffices to show that \(\textrm{diam}(A)<+\infty\). We define the set \(D:=\left\{ k\geq 0 : \mathbbm{1}_{K_0}(\phi^{k}(x_0))=1 \right\}\). Theorem 3.19 (a) in \cite{furstenberg2014recurrence} asserts that there is an integer \(k_0\geq 1\) such that for every integer \(k\geq 0\) at least one of the integers \(k, k+1, \ldots, k+k_0\) is contained in the set \(D-D:=\big\{ d-d^\prime : d, d^\prime\in D, d\geq d^\prime\big\}\). We define the real number \(C:=\max\big\{ d(x_0, \phi^k(x_0)) : 0\leq k \leq k_0\big\}\). We claim that \(\textrm{diam}\left(A\right)\leq \textrm{diam}(B)+C \). Let \(k\geq 0\) be an integer. By the above there is an integer \(0 \leq l \leq k_0\) such that the integer \(k+l\) is contained in \(D-D\). We compute
\begin{equation*}
d(x_0, \phi^k(x_0))\leq d(x_0, \phi^{k+l}(x_0))+d(\phi^{k+l}(x_0), \phi^k(x_0))\leq \textrm{diam}(B)+C.
\end{equation*}
This concludes the proof, since \(\textrm{diam}\left(A\right)\leq \sup\big\{ d(x_0, \phi^k(x_0)) : k\geq 0\big\}\).
\end{proof}

\begin{proff}
Since the sequence \((\{0, \ldots, k-1\})_{k\geq 1}\) is a Følner sequence of the semigroup of the non-negative integers, it follows that Theorem \ref{maintheorem} is a direct consequence of Theorem \ref{mainTheorem} and Lemma \ref{bounded}. 
\end{proff}

\noindent
\textbf{\textsf{Acknowledgments:}} I would like to thank Urs Lang for his careful reading of earlier draft versions of this article and his helpful comments. Furthermore, I owe thanks to Dominic Descombes for his valuable suggestions.

\renewcommand\bibsection{\section*{\refname}}
\bibliographystyle{alpha}
\bibliography{refs}
\noindent
\textsc{\small{Mathematik Departement, ETH Zürich, Rämistrasse 101, 8092 Zürich, Schweiz}}\\
\textit{E-mail adress:}{\textsf{ giuliano.basso@math.ethz.ch}}

\end{document}